\documentclass[12pt, reqno]{amsart}
\usepackage[utf8]{inputenc}	
\usepackage{amssymb,amscd}
\usepackage{multirow,booktabs}
\usepackage[table]{xcolor}
\usepackage{fullpage}
\usepackage{lastpage}
\usepackage{fancyhdr}
\usepackage{mathrsfs}
\usepackage{wrapfig}
\usepackage{graphicx}
\usepackage{setspace}
\usepackage{calc}
\usepackage{bm}
\usepackage{tikz}
\usepackage{cite}
\usepackage{enumerate}
\usepackage{subfigure}
\usepackage{multicol}
\usepackage[colorlinks,
            linkcolor=blue,
            citecolor=red,
            ]{hyperref}
\usepackage{cancel}
\usepackage[all]{xy}
\usepackage{geometry}
\usepackage{empheq}
\usepackage{framed}
\usepackage{dsfont}
\usepackage{xcolor}
\geometry{a4paper, margin=1in}

\theoremstyle{plain}
\newtheorem{theorem}{Theorem}[section]
\newtheorem{lemma}[theorem]{Lemma}
\newtheorem{proposition}[theorem]{Proposition}

\theoremstyle{remark}

\usepackage{indentfirst}
\setlength{\parindent}{2em}
\begin{document}

\newcommand{\QQ}{\mathbb{Q}}
\newcommand{\RR}{\mathbb{R}}
\newcommand{\ZZ}{\mathbb{Z}}
\newcommand{\NN}{\mathbb{N}}
\newcommand{\N}{\mathbb{N}}
\newcommand{\Nor}{\mathscr{N}}
\newcommand{\CC}{\mathbb{C}}
\newcommand{\HH}{\mathbb{H}}
\newcommand{\EE}{\mathbb{E}}
\newcommand{\Var}{\operatorname{Var}}
\newcommand{\PP}{\mathbb{P}}
\newcommand{\Rd}{\mathbb{R}^d}
\newcommand{\Rn}{\mathbb{R}^n}
\newcommand{\BHH}{\overline{\mathbb{H}}}
\newcommand{\system}{(\Omega,\mathcal{F},\mu,T)}
\newcommand{\FF}{\mathcal{F}}
\newcommand{\GG}{\mathcal{G}}
\newcommand{\F}{\mathscr{F}}
\newcommand{\E}[1]{{\mathbb E}\!\left[#1\right]}
\newcommand{\cov}[1]{{\mathbb Cov}\!\left[#1\right]}
\newcommand{\MBS}{(\Omega,\mathcal{F})}
\newcommand{\MS}{(\Omega,\mathcal{F},\mu)}
\newcommand{\PS}{(\Omega,\mathcal{F},\mathbb{P})}
\newcommand{\Def}{\overset{\text{def}}{=}}
\newcommand{\ED}{\overset{\text{d}}{=}}
\newcommand{\Ser}[2]{#1_1,#1_2,\dots,#1_#2}
\newcommand{\convdis}{\overset{D}{\underset{n \to +\infty}{\longrightarrow}}}
\newcommand{\independent}{\perp\mkern-9.5mu\perp}
\def\avint{\mathop{\,\rlap{-}\!\!\int\!\!\llap{-}}\nolimits}
\newcommand{\convps}{\overset{a.s}{\underset{n \to +\infty}{\longrightarrow}}}
\newcommand{\convp}{\overset{\P}{\underset{n \to +\infty}{\longrightarrow}}}
\renewcommand{\P}{\mathbb{P}}

\author[Rafik Aguech]{Rafik Aguech}
 \address[Rafik Aguech]{Department of Statistics and Operation research, College of science, King Saud University, Riyadh, Saudi Arabia.}
  \email{raguech@ksu.edu.sa}

 \author[Shuo Qin]{Shuo Qin}
 \address[Shuo Qin]{Beijing Institute of Mathematical Sciences and Applications, and Yau Mathematical Sciences Center, Tsinghua University}
  \email{qinshuo@bimsa.cn}

\title{How two elephants can learn from each other}
\date{}

  \begin{abstract}
We consider a two-elephant walking model in which the elephants interact dynamically. At each time step, each elephant determines its next move randomly based on its partner's past movements. We show that the asymptotic behavior of the elephants mainly depends on the sign and the absolute value of the product of their reinforcement parameters. In various regimes, we establish the law of large numbers and the central limit theorem. Our proofs are based on a connection to the random recursive trees and employ stochastic approximation techniques and martingale methods.
  \end{abstract}

 \keywords{Interacting random processes, elephant random walk, random recursive trees, strong law of large numbers, asymptotic normality}
 
\subjclass{60K35, 60G42, 60F05, 60F15} 
 
\maketitle

\section{Introduction and results}

The elephant random walk (ERW) was introduced by Schütz and Trimper \cite{schutz2004elephants} to investigate the influence of long-term memory on random walk dynamics. Specifically, the ERW is a nearest-neighbor random walk on $\ZZ$ where at each time step, the elephant selects one of its past steps uniformly at random; it then repeats that step with probability $p\in [0,1]$ or takes the opposite step with probability $1-p$. The parameter $p$ is referred to as the memory parameter. 

The ERW has attracted considerable research interest in recent years. We refer the reader to \cite{laulin2022elephant,MR4908097,MR4926011} and the references therein. In particular, it is known that the ERW exhibits three distinct regimes: diffusive, critical, and superdiffusive, depending on whether $p<3/4$, $p=3/4$ or $p>3/4$. In the diffusive and critical regimes, the law of large numbers, the law of the iterated logarithm, a strong invariance principle, and the asymptotic normality  (under appropriate normalization) of the ERW have been established, see \cite{baur2016elephant, MR3741953, MR3748931, MR3652225}. In the superdiffusive regime, it has been shown \cite{MR3741953,MR3652225} that, when the ERW is normalized by $n^{2p-1}$, it converges almost surely to a non-degenerate random variable, which is not Gaussian; however, the fluctuation around this limit is Gaussian \cite{MR4034803}. This limiting random variable has been studied further in \cite{guerin2023fixed, MR4926011}. 

In many real-life scenarios, individuals tend to base their decisions on the choices made by others—a phenomenon known as social learning. For example, a person may be more likely to visit a restaurant if it has been frequented by friends. Elephants, being highly social animals, exhibit similar behavior; they can observe and learn from each other’s actions. Motivated by the definition of the ERW, we propose a two-elephant walking model in which, at each time step, each elephant randomly selects one of its partner's previous steps and repeats that step with a given probability, taking the opposite step otherwise.

More precisely, let $(\xi_n^{(1)})_{n\geq 2}$ (resp. $(\xi_n^{(2)})_{n\geq 2}$) be i.i.d. Bernoulli random variables with parameter $p_1\in [0,1]$ (resp. parameter $p_2\in [0,1]$). Let $(u_n^{(1)})_{n\geq 1}$ and $(u_n^{(2)})_{n\geq 1}$ be independent random variables where each $u_n^{(i)}$ is uniformly distributed on the set $\{1,2,\dots, n\}$ for $i=1,2$. The two-elephant walking model $(S_n^{(1)}, S_n^{(2)})_{n\geq 0}$ and corresponding step sequences $(X_n^{(1)}, X_n^{(2)})_{n\geq 1}$ are defined as follows:
\begin{enumerate}[(i)]
  \item Assume that $S_0^{(1)} = S_0^{(2)} = 0$, and $S_1^{(1)} = X_1^{(1)}\in \{-1,1\}$ and $S_1^{(2)} = X_1^{(2)} \in \{-1,1\}$.
  \item For $n\geq 1$, given $(X_k^{(1)})_{1\leq k \leq n}$ and $(X_k^{(2)})_{1\leq k \leq n}$, set
  $$X_{n+1}^{(1)} := \xi_{n+1}^{(1)} X_{u_n^{(2)}}^{(2)} - (1-\xi_{n+1}^{(1)}) X_{u_n^{(2)}}^{(2)}, \quad \text{and} \quad X_{n+1}^{(2)} := \xi_{n+1}^{(2)} X_{u_n^{(1)}}^{(1)} - (1-\xi_{n+1}^{(2)}) X_{u_n^{(1)}}^{(1)}.$$
\end{enumerate}
Here $S_n^{(1)}$ and $S_n^{(2)}$ represent the positions of the first and second elephants at time $n$, respectively. We shall be interested in the asymptotic behavior of $S_n^{(1)}$ and $S_n^{(2)}$ as $n\to\infty$.

The constants $p_1$ and $p_2$ above are referred to as the memory parameters for the first and second elephants, respectively. For simplicity, we introduce the reinforcement parameters
 $$\alpha_1 := 2p_1 - 1, \quad \alpha_2 := 2p_2 - 1.$$

 Notice that if either $\alpha_1$ or $\alpha_2$ equals zero, the corresponding elephant does not learn from its partner and performs a simple random walk. Without loss of generality, we assume that $\alpha_1 \neq 0$ and $\alpha_2 = 0$ in such cases. The following Theorem \ref{thmsrw} concerns the asymptotic behavior of $S_n^{(1)}$; here, $C(\alpha_1)$ denote a positive constant depending on $\alpha_1$ whose value may change from line to line.

\begin{theorem}
\label{thmsrw}
Let $(S_n^{(1)}, S_n^{(2)})_{n\geq 0}$ be a two-elephant walking model with memory parameters $p_1 \in [0,1]$ and $p_2=1/2$.  \\
(i) We have the following law of the iterated logarithm-type estimate:
\begin{equation}
  \label{LILalpha0}
C(\alpha_1) \leq  \limsup_{n\to\infty} \frac{S_n^{(1)}}{\sqrt{2n \log \log n}} \leq 1+\sqrt{2}|\alpha_1|, \quad \text{a.s..}
\end{equation}
 In particular, the walk $S^{(1)}$ is recurrent in the sense that every integer is visited by $S^{(1)}$ infinitely often almost surely.  \\
(ii) We have the following central limit theorem:
$$
\frac{S_n^{(1)}}{\sqrt{(1+2\alpha_1^2)n}}\convdis \mathcal{N}\left(0, 1\right).
$$
where  we used $\convdis$ for convergence in distribution and $\mathcal{N}(0,1)$ denotes the standard normal distribution. 
\end{theorem}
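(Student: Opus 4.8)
\emph{Plan of proof.} The key point is that, since $p_2=1/2$, the second elephant performs a genuine simple random walk whose increments are ``fresh''. Writing $\varepsilon_n:=2\xi_n^{(2)}-1$ (i.i.d.\ Rademacher) and letting $\mathcal G$ be the $\sigma$-field generated by the two initial signs, the memory bits $(\xi_n^{(1)})_{n\ge 2}$ and the samplers $(u_n^{(1)})_{n\ge 1},(u_n^{(2)})_{n\ge 1}$, one checks by induction that, conditionally on $\mathcal G$, each $X^{(2)}_m$ ($m\ge2$) is a fixed sign times a product of $\varepsilon_i$'s with indices $\le m-2$; hence every nonempty product $\prod_{m\in T}X_m^{(2)}$ contains $\varepsilon_{\max T}$ to an odd power, so $(X_m^{(2)})_{m\ge2}$ are i.i.d.\ Rademacher and independent of $\mathcal G$. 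In particular $(S_n^{(2)})_n$ is a simple random walk, and, conditionally on the whole trajectory of $S^{(2)}$, the variables $X_{k+1}^{(1)}=(2\xi_{k+1}^{(1)}-1)X_{u_k^{(2)}}^{(2)}$ ($k\ge1$) are independent, $\{-1,1\}$-valued, with $\mathbb E[X_{k+1}^{(1)}\mid S^{(2)}]=\alpha_1 S_k^{(2)}/k$ and conditional variance $1-\alpha_1^2(S_k^{(2)}/k)^2$. This yields the decomposition
\[
S_n^{(1)}=X_1^{(1)}+A_n+N_n,\qquad
A_n:=\alpha_1\sum_{k=1}^{n-1}\frac{S_k^{(2)}}{k},\qquad
N_n:=\sum_{k=1}^{n-1}\Big(X_{k+1}^{(1)}-\alpha_1\tfrac{S_k^{(2)}}{k}\Big),
\]
in which $A_n$ is a measurable function of $S^{(2)}$ and, conditionally on $S^{(2)}$, $N_n$ is a sum of independent, mean-zero, $[-2,2]$-valued terms with conditional variance $v_n=(n-1)-\alpha_1^2\sum_{k=1}^{n-1}(S_k^{(2)}/k)^2$; by the classical LIL for $S^{(2)}$ one has $\sum_{k\ge1}(S_k^{(2)}/k)^2=O\big((\log n)\log\log n\big)$ a.s., so $v_n/n\to1$ a.s. The whole proof studies $S^{(1)}$ conditionally on $S^{(2)}$ and then averages over $S^{(2)}$.

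\emph{Central limit theorem.} Conditionally on (almost every) $S^{(2)}$, $N_n$ is a sum of bounded independent centered variables with $v_n\to\infty$, so Lindeberg's condition is trivial and $N_n/\sqrt n\convdis\mathcal N(0,1)$. For the drift, interchanging summations gives $A_n=\alpha_1\sum_{j=1}^{n-1}X_j^{(2)}\big(H_{n-1}-H_{j-1}\big)$ with $H_m=\sum_{i=1}^m1/i$; this is a weighted sum of i.i.d.\ Rademacher variables whose weights are $O(\log n)$ and satisfy $\frac1n\sum_{j=1}^{n-1}(H_{n-1}-H_{j-1})^2\to\int_0^1(\log\tfrac1s)^2\,ds=2$, hence (Lindeberg for triangular arrays) $A_n/\sqrt n\convdis\mathcal N(0,2\alpha_1^2)$. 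Finally, for $t\in\RR$,
\[
\mathbb E\!\left[e^{itS_n^{(1)}/\sqrt n}\right]
=\mathbb E\!\left[e^{it(X_1^{(1)}+A_n)/\sqrt n}\,\mathbb E\!\big[e^{itN_n/\sqrt n}\mid S^{(2)}\big]\right],
\]
the inner conditional characteristic function equals $e^{-t^2/2}+\rho_n$ with $\rho_n\to0$ a.s.\ and $|\rho_n|\le2$, and dominated convergence together with $A_n/\sqrt n\convdis\mathcal N(0,2\alpha_1^2)$ turns the right-hand side into $e^{-t^2/2}e^{-t^2\alpha_1^2}=e^{-t^2(1+2\alpha_1^2)/2}$. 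Thus $S_n^{(1)}/\sqrt{(1+2\alpha_1^2)n}\convdis\mathcal N(0,1)$.

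\emph{Upper bound in the LIL.} Kolmogorov's LIL applied to the conditionally independent, uniformly bounded sum $N_n$ gives (conditionally on $S^{(2)}$, hence a.s.) $\limsup_n N_n/\sqrt{2v_n\log\log v_n}=1$, i.e.\ $\limsup_n N_n/\sqrt{2n\log\log n}=1$ since $v_n\sim n$. The drift $A_n$, being a Rademacher-weighted sum, is sub-Gaussian with variance proxy $\sigma_n^2=\alpha_1^2\sum_{j=1}^{n-1}(H_{n-1}-H_{j-1})^2\sim2\alpha_1^2n$; a Borel--Cantelli argument along $n_k=\lceil\theta^k\rceil$ yields $\limsup_k|A_{n_k}|/\sqrt{2n_k\log\log n_k}\le\sqrt2\,|\alpha_1|$, while within a block $[n_k,n_{k+1})$ one has $|A_n-A_{n_k}|\le|\alpha_1|(\log\theta)\max_{j<n_{k+1}}|S_j^{(2)}|=O\big((\log\theta)\sqrt\theta\big)\cdot\sqrt{2n_k\log\log n_k}$ by the LIL for $S^{(2)}$; letting $\theta\downarrow1$ gives $\limsup_n|A_n|/\sqrt{2n\log\log n}\le\sqrt2\,|\alpha_1|$. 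Adding the two estimates yields $\limsup_n S_n^{(1)}/\sqrt{2n\log\log n}\le1+\sqrt2\,|\alpha_1|$.

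\emph{Lower bound, recurrence, and the main difficulty.} The delicate part is the lower bound: conditionally on $S^{(2)}$ the drift $A_n$ is a fixed sequence to which the centered noise $N_n$ is ``blind'', so one cannot simply add the two $\limsup$'s. The plan is first to prove an \emph{iterated-logarithm lower bound for the smooth functional $A_n$}: decomposing $A_{n_{k+1}}$ ($n_k=\lceil\theta^k\rceil$) into the ``new'' piece $\alpha_1\sum_{j=n_k}^{n_{k+1}-1}X_j^{(2)}(H_{n_{k+1}-1}-H_{j-1})$, which is independent across $k$ with variance $\sim2\alpha_1^2n_{k+1}$ once $\theta$ is large, plus a remainder of variance $O_\theta\!\big(n_{k+1}(\log\theta)^2/\theta\big)$, the second Borel--Cantelli lemma (applied to the independent new pieces) together with a first-Borel--Cantelli control of the remainder gives $\limsup_n A_n/\sqrt{2n\log\log n}\ge(1-o_\theta(1))\sqrt2\,|\alpha_1|$. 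Then, fixing a subsequence $(m_j)$—deterministic given $S^{(2)}$—along which $A_{m_j}/\sqrt{2m_j\log\log m_j}\to\ell:=\sqrt2\,|\alpha_1|$, we thin it to a sub-subsequence sparse enough that $\sum_i\mathbb P\big(|N_{m_{j_i}}|>\varepsilon\sqrt{m_{j_i}\log\log m_{j_i}}\mid S^{(2)}\big)<\infty$ (possible because, by Chebyshev and $v_n\sim n$, this conditional probability is $\lesssim1/\log\log m_{j_i}$), so that $N_{m_{j_i}}/\sqrt{2m_{j_i}\log\log m_{j_i}}\to0$ a.s.; hence $S_{m_{j_i}}^{(1)}/\sqrt{2m_{j_i}\log\log m_{j_i}}\to\ell$ and $\limsup_n S_n^{(1)}/\sqrt{2n\log\log n}\ge C(\alpha_1):=\sqrt2\,|\alpha_1|>0$ (when $\alpha_1=0$, $S^{(1)}$ is a genuine simple random walk and the classical LIL gives the constant $1$). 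Since the model is invariant under $(S^{(1)},S^{(2)})\mapsto(-S^{(1)},-S^{(2)})$, the same reasoning with the negative records of $A$ gives $\liminf_n S_n^{(1)}/\sqrt{2n\log\log n}\le-C(\alpha_1)$; as $S^{(1)}$ has $\pm1$ increments, this forces $\limsup_n S_n^{(1)}=+\infty$ and $\liminf_n S_n^{(1)}=-\infty$ a.s., i.e.\ every integer is visited infinitely often. The most technical steps are expected to be the iterated-logarithm lower bound for $A_n$ and the bookkeeping in its block decomposition.
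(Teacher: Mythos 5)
Your proposal is correct in outline, and for part (ii) it takes a genuinely different route from the paper. The paper proves the CLT by rewriting $S^{(1)}_n=X_1^{(1)}+\sum_k\tilde S^{(k)}_{d_n(k)}X_k$ through a random recursive tree (the $d_n(k)$ being out-degrees, the $\tilde S^{(k)}$ independent biased walks) and extracting the variance $1+2\alpha_1^2$ from $\frac1n\sum_k d_n(k)^2\to3$, which needs the degree asymptotics $Y_{n,i}/n\to2^{-i-1}$ and a separate concentration step. Your route --- condition on the trajectory of $S^{(2)}$, split $S^{(1)}_n=X_1^{(1)}+A_n+N_n$, prove a conditional Lindeberg CLT for $N_n$ with deterministic limit $\mathcal N(0,1)$ and a triangular-array CLT for the weighted Rademacher sum $A_n$ with variance $\sim 2\alpha_1^2n$, then glue by characteristic functions --- is more elementary and self-contained, and your careful verification that $(X_m^{(2)})_{m\ge2}$ is an i.i.d.\ sign sequence independent of the driving noise of $S^{(1)}$ (via the parity of $\varepsilon_{\max T}$ in products) is a point the paper leaves implicit. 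For part (i) your decomposition is exactly the paper's ($N_n$ is its martingale $M_n$ up to the initial term), and the upper bound is obtained the same way except that you prove the LIL upper bound for the weighted sum $A_n$ by hand (sub-Gaussian tails plus blocking) where the paper invokes a ready-made two-sided LIL for weighted i.i.d.\ sums. The real divergence is the lower bound: the paper first combines the two two-sided LILs to get $C(\alpha_1)=\bigl|1-\sqrt2\,|\alpha_1|\bigr|$, which degenerates at $|\alpha_1|=1/\sqrt2$, and patches that case with a conditional CLT for $M_n$ along $\sigma(S^{(2)})$-measurable record times of the drift; you go directly to the drift's record times and annihilate the noise along a sparse sub-subsequence by conditional Chebyshev plus Borel--Cantelli, yielding the uniform constant $\sqrt2\,|\alpha_1|$ with no case split (neither constant dominates the other, but both are positive for $\alpha_1\neq0$, which is all the theorem asserts). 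Two bookkeeping remarks on the one step you only sketch: the ``old'' piece of $A_{n_{k+1}}$ has variance $\alpha_1^2n_{k+1}\bigl((\log\theta)^2+2\log\theta+2\bigr)/\theta$ rather than $O\bigl(n_{k+1}(\log\theta)^2/\theta\bigr)$ --- harmless, since it still vanishes as $\theta\to\infty$ --- and the second Borel--Cantelli application to the independent ``new'' pieces requires the Kolmogorov exponential lower bound, which does apply because the individual weights are $O(\log\theta)=o\bigl(\sqrt{n_{k+1}/\log\log n_{k+1}}\bigr)$; this is precisely the content of the weighted-sum LIL that the paper imports by citation.
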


Now assume that both $\alpha_1$ and $\alpha_2$ are non-zero. For $\alpha=(\alpha_1,\alpha_2)$, let 
$$\lambda_{\alpha}:=\begin{cases}
\operatorname{sgn}(\alpha_2)\sqrt{\alpha_1 \alpha_2}, & \text{if } \alpha_1 \alpha_2>0,\\ 
\mathrm{i}\operatorname{sgn}(\alpha_2)\sqrt{-\alpha_1 \alpha_2}, & \text{if } \alpha_1 \alpha_2<0,
\end{cases}
$$
where $\mathrm{i}$ denotes the imaginary unit. When $\alpha_1$ and $\alpha_2$ have the same sign, the following Theorem \ref{ThmtwoERW} shows that, as in the case of a single elephant random walk, there are three regimes: diffusive, critical and superdiffusive, depending on whether $|\lambda_{\alpha}|<1/2$, $|\lambda_{\alpha}|=1/2$ or $|\lambda_{\alpha}|>1/2$. The results are stated for $S^{(1)}$ only, the analogous results for $S^{(2)}$ can be obtained by interchanging $\alpha_1$ and $\alpha_2$. 

We let $C(\alpha)$ denote a positive constant depending on $\alpha_1$ and $\alpha_2$ whose value may change from line to line. Notice that in Theorem \ref{ThmtwoERW}, if $\alpha_1=\alpha_2=1$, we shall assume that $X_1^{(1)}\neq X_1^{(2)}$ since otherwise the model would be trivial with $S_n^{(1)}=S_n^{(2)}=n X_1^{(1)}$ for all $n\geq 1$. Similarly, if $\alpha_1=\alpha_2=-1$, we shall assume that $X_1^{(1)}= X_1^{(2)}$.

\begin{theorem}
  \label{ThmtwoERW}
  Let $(S_n^{(1)}, S_n^{(2)})_{n\geq 0}$ be a two-elephant walking model with memory parameters $p_1,p_2 \in [0,1]$ such that $\alpha_1 \alpha_2>0$. \\
  (i) If $|\lambda_{\alpha}|<1/2$, then we have the following LIL-type estimate
\begin{equation}
\label{Sn1LIL}
\limsup_{n\to \infty} \frac{|S_n^{(1)}|}{\sqrt{ n \log \log n}} \leq C(\alpha),
\end{equation}
and the asymptotic normality:
\begin{equation}
  \label{2ERWCLTaless12}
\frac{S_n^{(1)}}{\sqrt{n}} \convdis \mathcal{N}\left(0,\frac{1+2\alpha_1^2-2\alpha_1\alpha_2}{1-4\alpha_1 \alpha_2}\right).
\end{equation}
(ii) If $|\lambda_{\alpha}|=1/2$, then we have the following LIL-type estimate
$$
\limsup_{n\to \infty} \frac{|S_n^{(1)}|}{\sqrt{n \log n\log \log \log n}} \leq C(\alpha),
$$
and the asymptotic normality:
\begin{equation}
  \label{2ERWCLT12}
  \frac{S_n^{(1)}}{\sqrt{n \log n}} \convdis \mathcal{N}\left(0, \frac{\alpha_1+\alpha_2}{4\alpha_2}\right).
\end{equation}
(iii) If $|\lambda_{\alpha}|>1/2$, then we have the following a.s.-convergence result:
$$
\lim_{n\to \infty} \frac{S^{(1)}_n}{n^{|\lambda_{\alpha}|}} = \operatorname{sgn}(\alpha_2) \sqrt{\frac{\alpha_1}{\alpha_2}}\lim_{n\to \infty} \frac{S^{(2)}_n}{n^{|\lambda_{\alpha}|}}=W,  \quad a.s.,
$$
where $W$ is a non-degenerate random variable.
\end{theorem}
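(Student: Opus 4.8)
\emph{Sketch of proof.} The plan is to diagonalise the mean recursion of the pair $\mathbf S_n:=(S_n^{(1)},S_n^{(2)})$, which converts the model into two coupled scalar martingales, and then to read off the three regimes from the size of the leading eigenvalue $\mu:=|\lambda_{\alpha}|=\sqrt{\alpha_1\alpha_2}>0$. Write $\mathcal F_n=\sigma(X_k^{(1)},X_k^{(2)}:1\le k\le n)$. Since each $u_n^{(i)}$ is uniform on $\{1,\dots,n\}$ and independent of the rest, $\EE[X_{n+1}^{(1)}\mid\mathcal F_n]=\alpha_1 S_n^{(2)}/n$ and $\EE[X_{n+1}^{(2)}\mid\mathcal F_n]=\alpha_2 S_n^{(1)}/n$, so, regarding $\mathbf S_n$ as a column vector, $\EE[\mathbf S_{n+1}\mid\mathcal F_n]=(I+A/n)\mathbf S_n$ with $A=\bigl(\begin{smallmatrix}0&\alpha_1\\ \alpha_2&0\end{smallmatrix}\bigr)$. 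Because $\alpha_1\alpha_2>0$, $A$ has the two real eigenvalues $\pm\mu$; taking the left eigenvectors $w^{\pm}=(\mu,\pm\alpha_1)$ and setting $Z_n^{\pm}:=w^{\pm}\mathbf S_n$ we get $\EE[Z_{n+1}^{\pm}\mid\mathcal F_n]=(1\pm\mu/n)Z_n^{\pm}$, and conversely $2\mu\,S_n^{(1)}=Z_n^{+}+Z_n^{-}$, $2\alpha_1 S_n^{(2)}=Z_n^{+}-Z_n^{-}$. With $a_n^{\pm}:=\Gamma(n\pm\mu)/(\Gamma(n)\Gamma(1\pm\mu))\sim n^{\pm\mu}/\Gamma(1\pm\mu)$ (a minor modification is needed in the boundary case $\mu=1$, where one argues directly with $Z_n^{-}$), the normalised processes $M_n^{\pm}:=Z_n^{\pm}/a_n^{\pm}$ are $(\mathcal F_n)$-martingales. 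Finally, because $\xi^{(1)},\xi^{(2)},u^{(1)},u^{(2)}$ are independent, $X_{n+1}^{(1)}$ and $X_{n+1}^{(2)}$ are conditionally independent given $\mathcal F_n$, each with conditional variance $1-\alpha_i^2(S_n^{(j)}/n)^2\le1$, so $\EE[(\Delta M_{n+1}^{\pm})^2\mid\mathcal F_n]\le C(\alpha)(a_{n+1}^{\pm})^{-2}$ and $\langle M^{\pm}\rangle_n\asymp\sum_{k\le n}k^{\mp2\mu}$.

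\emph{The dominant mode.} If $\mu>1/2$, then $\sum_k k^{-2\mu}<\infty$, hence $M_n^{+}$ is $L^2$-bounded and converges a.s.\ and in $L^2$ to a limit $W$; one checks $\Var(W)\ge\Var(M_2^{+})>0$ (the fully degenerate configurations $\alpha_1=\alpha_2=\pm1$ with trivializing initial data being excluded by assumption), so $W$ is non-degenerate. If $\mu=1/2$, then $S_n^{(j)}/n\to0$ (itself a consequence of the crude $L^2$ bounds) forces $\EE[(\Delta M_{n+1}^{+})^2\mid\mathcal F_n]\sim c_{\alpha}/n$ and hence $\langle M^{+}\rangle_n\sim c_{\alpha}\log n$; the martingale CLT then gives $M_n^{+}/\sqrt{\log n}\convdis\mathcal N(0,c_{\alpha})$ and the martingale LIL gives $|M_n^{+}|=O\bigl(\sqrt{\langle M^{+}\rangle_n\log\log\langle M^{+}\rangle_n}\bigr)=O\bigl(\sqrt{\log n\,\log\log\log n}\bigr)$. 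If $\mu<1/2$, then $\langle M^{+}\rangle_n\asymp n^{1-2\mu}$, so by the martingale LIL $Z_n^{+}=a_n^{+}M_n^{+}=O(\sqrt{n\log\log n})$ a.s.; here it is cleanest to invoke a central limit theorem for multivariate linear stochastic approximation directly for $\mathbf S_n/\sqrt n$, whose limiting covariance $\Sigma$ satisfies the Lyapunov equation $(A-\tfrac12 I)\Sigma+\Sigma(A-\tfrac12 I)^{\top}+I=0$ — the limiting conditional covariance of the increments being the identity, since $\EE[X_{n+1}^{(1)}X_{n+1}^{(2)}\mid\mathcal F_n]=\alpha_1\alpha_2 S_n^{(1)}S_n^{(2)}/n^2\to0$ — and solving it yields $\Sigma_{11}=(1+2\alpha_1^2-2\alpha_1\alpha_2)/(1-4\alpha_1\alpha_2)$.

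\emph{The subdominant mode and assembly.} In every regime $\langle M^{-}\rangle_n\asymp\sum_{k\le n}k^{2\mu}\asymp n^{1+2\mu}$, so the martingale strong law and LIL give $Z_n^{-}=a_n^{-}M_n^{-}=O(\sqrt{n\log\log n})$ a.s.; thus $Z_n^{-}$ has strictly smaller order than $Z_n^{+}$ exactly when $\mu>1/2$ (it is of order $\sqrt n$ when $\mu<1/2$, and $O(\sqrt{n\log\log n})=o(\sqrt{n\log n})$ when $\mu=1/2$). Assembling: in case (iii), $S_n^{(1)}/n^{\mu}=(Z_n^{+}+Z_n^{-})/(2\mu n^{\mu})\to W/(2\mu\Gamma(1+\mu))$ a.s., and likewise $S_n^{(2)}/n^{\mu}\to W/(2\alpha_1\Gamma(1+\mu))$, so the two limits are proportional with ratio $\alpha_1/\mu=\operatorname{sgn}(\alpha_2)\sqrt{\alpha_1/\alpha_2}$, which is the stated identity with a non-degenerate limit; in case (i), $|S_n^{(1)}|\le(|Z_n^{+}|+|Z_n^{-}|)/(2\mu)=O(\sqrt{n\log\log n})$ is \eqref{Sn1LIL}, while \eqref{2ERWCLTaless12} is the first coordinate of the CLT for $\mathbf S_n/\sqrt n$; in case (ii), $|S_n^{(1)}|\le|Z_n^{+}|+|Z_n^{-}|=O(\sqrt{n\log n\,\log\log\log n})$, and since $Z_n^{-}$ is negligible, $S_n^{(1)}/\sqrt{n\log n}$ has the same Gaussian limit as $Z_n^{+}/\sqrt{n\log n}$, whose variance simplifies to $(\alpha_1+\alpha_2)/(4\alpha_2)$.

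The bookkeeping above is routine once the appropriate martingale limit theorems are available; the genuinely delicate points are: (a) upgrading the $L^2$ control of $M^{-}$ to the \emph{almost sure} estimate $Z_n^{-}=o(n^{\mu})$ required in the superdiffusive regime — this is where the random-recursive-tree representation of $\mathbf S_n$ is useful, each vertex $k$ of type $i$ attaching to a uniformly chosen older vertex of type $\neq i$ and carrying the flip $2\xi_k^{(i)}-1$, so that $X_k^{(i)}$ equals the product of the flips along its ancestral path; combined with known facts about such tree functionals this also yields finer information on $W$ — and (b) proving the non-degeneracy of $W$, which requires excluding the trivial configurations and checking that $M^{+}$ is not eventually a.s.\ constant. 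Verifying the exact constants in the two central limit theorems (the Lyapunov solution and the critical-regime normalisation) is a finite computation that I do not reproduce here.
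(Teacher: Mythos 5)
Your proposal follows essentially the same route as the paper: projecting onto the eigenvectors of the mean-drift matrix produces exactly the paper's coordinates $x_n,y_n$ of \eqref{defxyn} (your $Z_n^{\mp}$ are just $\mu\,n\,x_n$ and $\mu\,n\,y_n$ with $\mu=|\lambda_{\alpha}|$), and the subsequent mode-by-mode analysis — $L^2$-boundedness of the dominant martingale when $\mu>1/2$, Stout's LIL and a martingale CLT otherwise — is the paper's argument (Lemmas \ref{lemmaxyn}--\ref{lemlambda1} and the proof in Section \ref{secthm2dERW}). The one genuine variation is the diffusive CLT: you read off the limiting covariance of $\mathbf S_n/\sqrt n$ from the Lyapunov equation of the associated stochastic-approximation scheme (with limiting increment covariance $I$, which is correct by the conditional independence of $X_{n+1}^{(1)}$ and $X_{n+1}^{(2)}$), whereas the paper builds a single scalar martingale triangular array out of the two modes and computes the variance by hand in \eqref{less12asymvarXnk}; both give $(1+2\alpha_1^2-2\alpha_1\alpha_2)/(1-4\alpha_1\alpha_2)$, and the critical-regime constant you leave unchecked also comes out to $(\alpha_1+\alpha_2)/(4\alpha_2)$. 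Two small points to fix: (a) your "delicate point" about needing the random-recursive-tree representation to get $Z_n^-=o(n^{\mu})$ in the superdiffusive regime is a red herring — your own LIL bound $Z_n^-=O(\sqrt{n\log\log n})$ (or the weaker martingale strong law, which is what the paper actually uses in \eqref{Mnxasymlln}) already suffices since $\mu>1/2$; (b) the non-degeneracy bound $\operatorname{Var}(W)\geq\operatorname{Var}(M_2^{+})$ is fine for $\mu\in(1/2,1)$, where $\operatorname{Var}(Z_2^{+}\mid\FF_1)=\mu^2(1-\alpha_1^2)+\alpha_1^2(1-\alpha_2^2)>0$, but it fails in the boundary case $\alpha_1=\alpha_2=1$ with $X_1^{(1)}\neq X_1^{(2)}$, where $Z_2^{+}=S_2^{(1)}+S_2^{(2)}=0$ almost surely; there one must start the variance sum one step later, exactly as the paper does with $\operatorname{Var}(\lim_n x_n)=\sum_{j\geq 2}\EE(x_{j+1}-x_j)^2>0$ in the mirror case $\alpha_1=\alpha_2=-1$.
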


When $\lambda_{\alpha}$ is real with $|\lambda_{\alpha}|>1/2$, the asymptotic behavior of the fluctuation is given by the following theorem. 
\begin{theorem}
  \label{flucthm}
  In the setting of Theorem \ref{ThmtwoERW}, if $|\lambda_{\alpha}|\in (1/2,1)$, then we have 
$$
n^{|\lambda_{\alpha}|-\frac{1}{2}}\left(\frac{S_n^{(1)}}{n^{|\lambda_{\alpha}|}}-W\right)\convdis \mathcal{N}\left(0, \left(1+\frac{\alpha_1}{\alpha_2}\right)\frac{\sqrt{\alpha_1\alpha_2}}{4\alpha_1\alpha_2-1} \right);
$$
and if $|\lambda_{\alpha}|=1$, then we have 
$$
\sqrt{n}\left(\frac{S_n^{(1)}}{n^{|\lambda_{\alpha}|}}-W\right)\convdis L,
$$
where the random variable $L$ has characteristic function 
$$\EE (e^{\mathrm{i} t L})=\EE \exp\left(-\frac{(1-W^2)t^2}{3} \right), \quad t\in \RR.
$$
  \end{theorem}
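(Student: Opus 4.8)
\textbf{Proof proposal for Theorem \ref{flucthm}.}

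The plan is to exploit the connection to random recursive trees already used in the proof of Theorem \ref{ThmtwoERW}, together with the martingale structure that produces the almost-sure limit $W$. After the change of variables that linearizes the dynamics (diagonalizing the $2\times 2$ reinforcement matrix, whose eigenvalues have modulus $|\lambda_\alpha|$), one obtains a martingale $M_n$ such that $n^{-|\lambda_\alpha|}S_n^{(1)}$ and a multiple of $n^{-|\lambda_\alpha|}S_n^{(2)}$ are both asymptotically $W$, and $M_n \to M_\infty$ with $W$ expressible through $M_\infty$. The fluctuation $n^{|\lambda_\alpha|-1/2}(n^{-|\lambda_\alpha|}S_n^{(1)}-W)$ is then, up to lower-order terms, $n^{-1/2}$ times the martingale tail $M_\infty - M_n$ (when $|\lambda_\alpha|<1$) or $n^{-1/2}$ times a non-normalized remaining sum (when $|\lambda_\alpha|=1$). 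First I would write the exact recursion for the increments and identify the predictable quadratic variation $\langle M\rangle_n$; this is where the combinatorial structure of the recursive tree enters, since the conditional variance of the $(n+1)$-st increment is a quadratic form in $(S_n^{(1)}/n, S_n^{(2)}/n)$ with coefficients built from $\alpha_1,\alpha_2$.

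For the regime $|\lambda_\alpha|\in(1/2,1)$, I would apply the martingale central limit theorem to the rescaled tail increments $\Delta_k := k^{|\lambda_\alpha|-1/2}\cdot(\text{increment of } n^{-|\lambda_\alpha|}S^{(1)})$. The key computation is that $\sum_{k\le n}\EE[\Delta_k^2\mid \mathcal{F}_{k-1}]$ converges almost surely; using $S_k^{(i)}/k^{|\lambda_\alpha|}\to$ (multiple of) $W$ and the explicit conditional-variance quadratic form, this sum telescopes to a deterministic constant times $(\text{something involving }1)$ — the coefficient $(1+\alpha_1/\alpha_2)\sqrt{\alpha_1\alpha_2}/(4\alpha_1\alpha_2-1)$ should emerge from evaluating $\sum_{k} k^{2|\lambda_\alpha|-1}\cdot k^{-2|\lambda_\alpha|}\cdot(\text{variance coefficient})$ against the exact asymptotics of the normalizing products $\prod(1+\lambda_\alpha/j)$. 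One must check the conditional Lindeberg condition, which is immediate since increments are bounded by $O(k^{-1/2-(1-|\lambda_\alpha|)})=o(1)$. The mixing of the two walks means the limit variance is \emph{not} random despite the $W$-dependence dropping out — I would verify that the $W^2$ terms in the conditional variance cancel because of the relation $\alpha_1\alpha_2\lambda_\alpha^{-2}=1$.

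For the critical fluctuation regime $|\lambda_\alpha|=1$, the tail sum no longer converges and one instead gets a genuinely random (mixed-Gaussian) limit $L$: here $\EE[\Delta_k^2\mid\mathcal{F}_{k-1}]$ does not decay summably, but its partial sums, after the correct normalization $\sqrt{n}$, converge to $\tfrac{2}{3}(1-W^2)$ — the $(1-W^2)$ factor surviving precisely because at $|\lambda_\alpha|=1$ the quadratic form in $(S^{(1)}_n/n,S^{(2)}_n/n)\to(W, \pm W)$ contributes a nonvanishing $W^2$ term. Conditioning on the tail $\sigma$-algebra generated by $W$ and applying a conditional martingale CLT then yields the stated characteristic function $\EE\exp(-(1-W^2)t^2/3)$; the factor $1/3$ comes from integrating $\int_0^1 (1-\text{const}\cdot s)\,ds$-type expressions against the time change. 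I expect the main obstacle to be the bookkeeping of the exact constants: one must track the precise asymptotics of the deterministic products $a_n:=\prod_{j}(1+\lambda_\alpha/j)$ and $\bar a_n$ for both eigenvalues, control the cross terms between the $S^{(1)}$ and $S^{(2)}$ contributions to the martingale, and show that all lower-order remainder terms (coming from replacing $S_k^{(i)}/k^{|\lambda_\alpha|}$ by its limit inside the conditional variance) are $o(1)$ after summation — this last point requires the rate of convergence $S_k^{(i)}/k^{|\lambda_\alpha|}-(\text{limit})=O(k^{-(|\lambda_\alpha|-1/2)+\epsilon})$, which itself follows from the case $|\lambda_\alpha|<1$ of the theorem applied self-referentially or from an $L^2$ estimate on the martingale tail.
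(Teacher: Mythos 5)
Your overall strategy (diagonalize the $2\times 2$ reinforcement matrix, pass to the eigendirections, and apply a martingale CLT for a triangular array whose limiting conditional variance is computed from the Gamma-function asymptotics of the normalizing products) is the same as the paper's. However, there is a concrete gap in your decomposition of the fluctuation that would yield the wrong variance in the regime $|\lambda_{\alpha}|\in(1/2,1)$. Assume $\lambda_{\alpha}>1/2$ and write $x_n=(S_n^{(1)}-r_{\alpha}S_n^{(2)})/n$, $y_n=(S_n^{(1)}+r_{\alpha}S_n^{(2)})/n$ as in the paper. Then
$$
\frac{S_n^{(1)}}{n^{\lambda_{\alpha}}}-W=\frac{n^{1-\lambda_{\alpha}}x_n}{2}+\Bigl(\frac{n^{1-\lambda_{\alpha}}y_n}{2}-W\Bigr),
$$
and \emph{both} summands are of the same order $n^{\frac12-\lambda_{\alpha}}$: the second is, up to constants and an $O(1/n)$ remainder, the tail $M^{(y)}_{\infty}-M^{(y)}_{n}$ of the convergent martingale that defines $W$ --- this is the only piece your proposal accounts for --- while the first is $\beta_n(\lambda_{\alpha})M_n^{(x)}$, i.e.\ roughly $n^{-1-\lambda_{\alpha}}$ times a \emph{divergent} martingale whose quadratic variation grows like $n^{1+2\lambda_{\alpha}}$. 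The paper therefore assembles a single martingale-difference array $(X_{n,k})_{k\geq 1}$ whose entries for $k<n$ are rescaled increments of $M^{(x)}$ and whose entries for $k\geq n$ are rescaled tail increments of $M^{(y)}$; the two blocks contribute $\tfrac14(1+\alpha_1/\alpha_2)\tfrac{1}{2\lambda_{\alpha}+1}$ and $\tfrac14(1+\alpha_1/\alpha_2)\tfrac{1}{2\lambda_{\alpha}-1}$ respectively, and only their sum equals the stated $(1+\alpha_1/\alpha_2)\sqrt{\alpha_1\alpha_2}/(4\alpha_1\alpha_2-1)$. Keeping only the tail, as you propose, gives $\tfrac14(1+\alpha_1/\alpha_2)/(2\lambda_{\alpha}-1)$, which is not the claimed constant. (Relatedly, your heuristic $\sum_k k^{2\lambda_{\alpha}-1}k^{-2\lambda_{\alpha}}$ is a divergent harmonic sum; the correct normalized sums are $n^{-1-2\lambda_{\alpha}}\sum_{k\leq n}k^{2\lambda_{\alpha}}\to (2\lambda_{\alpha}+1)^{-1}$ and $n^{2\lambda_{\alpha}-1}\sum_{k\geq n}k^{-2\lambda_{\alpha}}\to(2\lambda_{\alpha}-1)^{-1}$.) The same two-block structure is needed at $\lambda_{\alpha}=1$, where the forward block contributes $\tfrac16(1-W^2)$ and the tail block $\tfrac12(1-W^2)$, adding up to the $\tfrac23(1-W^2)$ you correctly identified.

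Two smaller points. First, for $|\lambda_{\alpha}|<1$ the $W$-dependence disappears from the limiting conditional variance not through an algebraic cancellation using $\alpha_1\alpha_2=\lambda_{\alpha}^2$, but simply because the quadratic correction terms are functions of $x_j$, $y_j$ and $S_j^{(1)}S_j^{(2)}/j^2$, all of which tend to $0$ almost surely when $|\lambda_{\alpha}|<1$; no rate of convergence of $S_k^{(i)}/k^{\lambda_{\alpha}}$ to its limit is required, only Ces\`aro averaging of these a.s.\ convergent conditional variances against the summable weights. Second, at $|\lambda_{\alpha}|=1$ the limit of $\sum_k\EE(X_{n,k}^2\mid\FF_{n,k-1})$ is the \emph{random} quantity $\tfrac23(1-W^2)$, so Proposition \ref{cltmartarray} as stated (which requires convergence to the constant $1$) does not apply directly; one needs a martingale CLT with random limiting variance (stable convergence, or the conditional argument you sketch by conditioning on $W$), which is what produces the mixed-Gaussian characteristic function $\EE\exp(-(1-W^2)t^2/3)$.
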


When $\alpha_1$ and $\alpha_2$ have opposite signs, the situation is quite different. The following Theorem \ref{Thmal2difsin2ndm} shows that both $S^{(1)}$ and $S^{(2)}$ are always diffusive.

\begin{theorem}
  \label{Thmal2difsin2ndm}
  Let $(S_n^{(1)}, S_n^{(2)})_{n\geq 0}$ be a two-elephant walking model with memory parameters $p_1,p_2 \in [0,1]$ such that $\alpha_1\alpha_2<0$. Then, we have the following LIL-type estimate:
  \begin{equation}
\label{Sn1LILnega}
\limsup_{n\to \infty} \frac{|S_n^{(1)}|}{\sqrt{ n \log \log n}} \leq C(\alpha).
\end{equation}
Moreover, as $n\to \infty$,
\begin{equation}
  \label{2ERWCLTa12diff}
\frac{S_n^{(1)}}{\sqrt{n}} \convdis \mathcal{N}\left(0, \frac{1+2\alpha_1^2-2\alpha_1\alpha_2}{1-4  \alpha_1\alpha_2}\right), \quad \frac{S_n^{(2)}}{\sqrt{n}} \convdis \mathcal{N}\left(0, \frac{1+2\alpha_2^2-2\alpha_1\alpha_2}{1-4  \alpha_1\alpha_2}\right).
\end{equation}
\end{theorem}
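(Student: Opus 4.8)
## Proof Proposal for Theorem \ref{Thmal2difsin2ndm}

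The plan is to exploit the same connection to random recursive trees (RRTs) that underlies the proofs of Theorems \ref{thmsrw}--\ref{flucthm}, but now with a complex reinforcement eigenvalue $\lambda_\alpha$. The first step is to set up the martingale framework. Writing $\mathcal{F}_n$ for the $\sigma$-algebra generated by the first $n$ steps of both walks, one computes
\begin{equation*}
\EE\!\left[X_{n+1}^{(1)} \mid \mathcal{F}_n\right] = \frac{\alpha_1}{n} S_n^{(2)}, \qquad \EE\!\left[X_{n+1}^{(2)} \mid \mathcal{F}_n\right] = \frac{\alpha_2}{n} S_n^{(1)}.
\end{equation*}
Hence the vector $(S_n^{(1)}, S_n^{(2)})$ satisfies a two-dimensional stochastic recursion whose drift matrix has eigenvalues $\pm\lambda_\alpha$. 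Since $\alpha_1\alpha_2 < 0$, both eigenvalues are purely imaginary with $|\lambda_\alpha| = \sqrt{-\alpha_1\alpha_2}$; in particular $\mathrm{Re}(\pm\lambda_\alpha) = 0 < 1/2$, which is precisely the ``diffusive'' condition. I would diagonalize (over $\CC$) to obtain a scalar complex-valued process $M_n := S_n^{(1)} + c\,\mathrm{i}\, S_n^{(2)}$ for the appropriate constant $c = \sqrt{-\alpha_1/\alpha_2}$, so that $a_n^{-1} M_n$ is a martingale for a suitable deterministic sequence $a_n \sim \Gamma$-ratio; because $|\lambda_\alpha|$ is the modulus of an imaginary number we get $a_n \asymp n^{\mathrm{i}|\lambda_\alpha|}$ up to lower-order corrections, and $|a_n|$ grows only polylogarithmically slower than... in fact $|a_n|$ is bounded between constants (this is the key mechanism: oscillation, not growth).

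The second step is the LIL-type bound \eqref{Sn1LILnega}. Having written $S_n^{(1)}$ as the real part of $a_n \cdot (\text{martingale } N_n)$ with $N_n = \sum_{k} a_k^{-1}\,\Delta_k$ and $|a_k|$ bounded above and below, I would control the quadratic variation $\langle N \rangle_n$. Since each increment contributes $O(1/|a_k|^2) = O(1)$ times $1/k^2$... more precisely $\EE[|\Delta_k|^2 \mid \mathcal{F}_{k-1}]$ is $O(1)$, the bracket of the rescaled martingale $\tilde M_n := \sum_{k\le n} a_k^{-1}\Delta_k / k$-type normalization grows like $\log n$, so $n^{-1}\langle \text{(un-normalized)} \rangle_n \to \sigma^2$ for an explicit $\sigma^2$. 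Feeding this into a martingale LIL (e.g. the Stout LIL for martingales with bounded increments, as presumably used for Theorem \ref{thmsrw}(i)) yields \eqref{Sn1LILnega}; the constant $C(\alpha)$ need not be sharp. For the CLT \eqref{2ERWCLTa12diff} I would invoke the martingale central limit theorem: verify the Lindeberg condition (trivial, increments bounded by $2$) and compute the limiting normalized bracket. The variance $(1 + 2\alpha_1^2 - 2\alpha_1\alpha_2)/(1 - 4\alpha_1\alpha_2)$ should emerge from solving a linear recursion for $\EE[(S_n^{(1)})^2]$ coupled with $\EE[(S_n^{(2)})^2]$ and $\EE[S_n^{(1)} S_n^{(2)}]$; note that with $\alpha_1\alpha_2 < 0$ the denominator $1 - 4\alpha_1\alpha_2 > 1$ is automatically positive, so no regime split occurs — consistent with the statement that both walks are always diffusive.

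The third step is to make the cross-covariance computation rigorous and to handle the oscillatory factor $a_n$ carefully: because $a_n$ does not converge (it rotates in $\CC$), one cannot simply say $S_n^{(1)}/\sqrt{n}$ is asymptotically $\mathrm{Re}(a_\infty N_\infty)$; instead the imaginary exponent must be absorbed into the martingale CLT by tracking the joint convergence of $(\mathrm{Re}\, a_n^{-1} S_n^{(1)}, \mathrm{Im}\, a_n^{-1} S_n^{(1)})$ or, more cleanly, by working directly with the real $2\times 2$ system and applying the vector-valued martingale CLT of Touati-type, then reading off the marginal. I expect this bookkeeping — reconciling the complex diagonalization with real-valued limit laws, and correctly identifying which normalization kills the oscillation — to be the main obstacle; the algebra of the second moments is routine by comparison. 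Finally, the statement for $S_n^{(2)}$ follows by the symmetry $\alpha_1 \leftrightarrow \alpha_2$ already noted in the text, which swaps the roles in the variance formula.
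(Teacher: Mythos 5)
Your overall strategy coincides with the paper's: diagonalize the drift matrix over $\CC$ (the paper's $x_n=(S_n^{(1)}-r_\alpha S_n^{(2)})/n$, $y_n=(S_n^{(1)}+r_\alpha S_n^{(2)})/n$ with $r_\alpha$ purely imaginary), observe that the normalizers $n\beta_n(\pm\lambda_\alpha)$ have bounded modulus because $\operatorname{Re}\lambda_\alpha=0$ (your ``oscillation, not growth''), get the LIL-type bound from Stout's theorem applied to the real and imaginary parts of the bounded-increment martingales $M^{(x)},M^{(y)}$, and get the CLT from the martingale triangular-array theorem with a trivially verified Lindeberg condition. All of that matches the paper's proof.

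However, you explicitly defer the one step that actually carries the proof: showing that the normalized conditional bracket converges to a \emph{deterministic real constant} rather than oscillating. This is a genuine gap, not mere bookkeeping. The paper's resolution has two ingredients you would need. First, the conjugate symmetry $\bar{x}_k=y_k$, $\overline{\varepsilon_{k+1}^{(x)}}=\varepsilon_{k+1}^{(y)}$, $\overline{\beta_k(\lambda_\alpha)}=\beta_k(-\lambda_\alpha)$ makes the combined array
$X_{n,k}=\tfrac{\sqrt{n+1}}{2}\bigl(\beta_{n+1}(\lambda_\alpha)\gamma_k(\lambda_\alpha)\beta_{k+1}(\lambda_\alpha)^{-1}\varepsilon_{k+1}^{(x)}+\text{c.c.}\bigr)$ real-valued, so no vector-valued CLT is needed. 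Second, and crucially, the squared terms in $\sum_k\EE(X_{n,k}^2\mid\FF_{n,k-1})$ carry a factor $\beta_{n+1}^2(\pm\lambda_\alpha)(n+1)^2\sim n^{\mp2\lambda_\alpha}$ times $\sum_{k\le n}\Gamma^2(k+1)/\Gamma^2(k+1\mp\lambda_\alpha)$, and each factor oscillates; the paper's Lemma \ref{lemsumGammanjsqr} shows the partial sum equals $\Gamma^2(n+1)/\bigl((1\mp2\lambda_\alpha)\,n\,\Gamma^2(n\pm\lambda_\alpha)\bigr)+O(\log n)$, i.e.\ its phase exactly cancels that of the prefactor, leaving $\frac{(\alpha_1+\alpha_2)n}{\alpha_2(1\mp2\lambda_\alpha)}+o(n)$; the two conjugate contributions and the cross term (which involves $|\beta_{n+1}(\lambda_\alpha)|^2$ and is non-oscillatory) then sum to the real constant $\frac{1+2\alpha_1^2-2\alpha_1\alpha_2}{1-4\alpha_1\alpha_2}$ using $\lambda_\alpha^2=\alpha_1\alpha_2$. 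Without an argument of this type your bracket could a priori oscillate and no CLT would follow. A further, smaller issue: your proposed moment recursion for $\EE[(S_n^{(1)})^2]$, $\EE[S_n^{(1)}S_n^{(2)}]$, $\EE[(S_n^{(2)})^2]$ only identifies the limit of the \emph{expected} bracket, whereas Proposition \ref{cltmartarray} requires convergence in probability of the \emph{conditional} bracket; the paper obtains this from the almost sure convergence $x_n,y_n\to0$ (Lemma \ref{asymvarxy}, itself a consequence of the martingale law of large numbers), which your outline does not establish. Your internal estimate that the bracket ``grows like $\log n$'' is also inconsistent with your own conclusion $n^{-1}\langle\cdot\rangle_n\to\sigma^2$; with $O(1)$ increments the bracket is of order $n$.
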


\section{Proof of Theorem \ref{thmsrw}}

Let $(S_n^{(1)}, S_n^{(2)})_{n\geq 0}$ be a two-elephant walking model with memory parameters $p_1,p_2 \in [0,1]$. For any integer $n\geq 1$, we let $\FF_n:=\sigma(X_j^{(i)}: 1\leq j\leq n,\, i=1,\, 2)$. By definition, we have
\begin{equation}
  \label{Xn1cond}
\EE (X_{n+1}^{(1)} \mid \FF_n)= \frac{\alpha_1 S_n^{(2)}}{n},\quad \EE (X_{n+1}^{(2)} \mid \FF_n)=\frac{\alpha_2 S_n^{(1)}}{n}, \quad n\geq 1.
\end{equation}

\begin{proof}[Proof of Theorem \ref{thmsrw} (i)]
Using (\ref{Xn1cond}), we see that the following process $(M_n)_{n\geq 1}$ is a martingale with uniformly bounded differences with respect to $(\FF_n)_{n\geq 1}$:
$$
 M_n := S_n^{(1)} - \sum_{i=1}^{n-1}\frac{\alpha_1 S_i^{(2)}}{i}, \quad n\geq 1,
$$
where $\alpha_1:=2p_1-1 \in [-1,1]$. Notice that $S_n^{(2)}/n \to 0$ a.s. by the law of large numbers. Using (\ref{Xn1cond}) again, we have
$$
\EE ((M_{n+1}-M_n)^2 \mid \FF_n)= \EE \left(\left(X_{n+1}^{(1)}-\frac{\alpha_1 S_n^{(2)}}{n}\right)^2 \mid \FF_n\right) =1-\frac{\alpha_1^2 (S_n^{(2)})^2}{n^2},
$$
which implies that $\langle M_n \rangle/ n$ converges to $1$ a.s. as $n\to \infty$. By \cite[Corollary 4.1, Theorem 4.8]{MR0624435}, almost surely,
\begin{equation}
  \label{MnLIL}
\liminf_{n\to \infty}\frac{ M_n }{\sqrt{2n \log \log n}} =-1,\quad \limsup_{n\to \infty}\frac{ M_n }{\sqrt{2n \log \log n}} =1.
\end{equation}

We write $H_0:=0$ and $H_n:=\sum_{i=1}^{n} 1/i$ for $n\geq 1$. Then Abel's summation by parts formula gives
$$
 \sum_{i=1}^{n-1}\frac{S_i^{(2)}}{i}= \sum_{i=1}^{n-1}(H_{n-1}-H_{i-1})X_i^{(2)}.
$$
One can easily check that 
$$
\sum_{i=1}^{n-1} (H_{n-1}-H_{i-1})^2 \sim 2n, \quad \text{as } n\to \infty.
$$
Using \cite[Theorem 1]{MR647507} (with $a_{n,i}:=H_{n-1}-H_{i-1}$ for $1\leq i \leq n-1$, and $n_k, I_k$ being as in Equation (1.20) in the notation there), we obtain that almost surely,
\begin{equation}
  \label{LILS2divn}
\liminf_{n\to \infty}\frac{1}{\sqrt{4n \log \log n}}  \sum_{i=1}^{n-1}\frac{S_i^{(2)}}{i}  = -1, \quad \limsup_{n\to \infty}\frac{1}{\sqrt{4n \log \log n}}  \sum_{i=1}^{n-1}\frac{S_i^{(2)}}{i}  = 1.
\end{equation}
Combining this with (\ref{MnLIL}), we obtain that almost surely,
$$
 |1-|\alpha_1|\sqrt{2} | \leq  \limsup_{n\to\infty} \frac{S_n^{(1)}}{\sqrt{2n \log \log n}} \leq 1+|\alpha_1|\sqrt{2},
$$
which proves the first inequality in (\ref{LILalpha0}) for $\alpha_1\neq \pm \sqrt{2}/2$ and the second inequality in (\ref{LILalpha0}) for all $\alpha_1$.

We let $\GG_0:=\sigma( (X_j^{(2)})_{j\geq 1})$ and $\GG_n:=\sigma((X_j^{(1)})_{1\leq j\leq n}, (X_j^{(2)})_{j\geq 1})$ for $n\geq 1$. Then the process $(M_n)_{n\in \NN}$ is also a martingale with respect to $(\GG_n)_{n\geq 1}$. We denote 
$$
E^{(2)}:= \left\{\lim_{n\to \infty} \frac{S_n^{(2)}}{n}=0, \quad \limsup_{n\to \infty}\frac{1}{\sqrt{4n \log \log n}}  \sum_{i=1}^{n-1}\frac{S_i^{(2)}}{i}  = 1 \right\}.
$$
Notice that $\PP(E^{(2)})=1$ in view of (\ref{LILS2divn}), and $\langle M_n \rangle/ n$ converges to $1$ a.s. on the event $E^{(2)}$ as $n\to \infty$. Moreover, using that $|X_{n+1}^{(1)}|=1$ and $|S_n^{(2)}|\leq n$, one has
$$
\begin{aligned}
&\quad \ \EE ((M_{n+1}-M_n)^4 \mid \GG_n) \\
&= 1 - \frac{4\alpha_1 X_{n+1}^{(1)} S_n^{(2)}}{n}+ 6\alpha_1^2  \left(\frac{ S_n^{(2)}}{n}\right)^2 - 4 \alpha_1^3 X_{n+1}^{(1)}  \left(\frac{  S_n^{(2)}}{n}\right)^3 + \alpha_1^4\left(\frac{  S_n^{(2)}}{n}\right)^4 \\
&\leq 1+\frac{15 |S_n^{(2)}|}{n}.
\end{aligned}
$$
Therefore, for any $\varepsilon>0$, by the law of large numbers, one has, a.s. on $E^{(2)}$,
$$
\frac{1}{n} \sum_{k=1}^n \EE \left( (M_{k+1}-M_{k})^2 \mathds{1}_{\{|M_{k+1}-M_{k}|\geq \varepsilon \sqrt{n}\}} \mid \GG_k\right) \leq \frac{1}{\varepsilon^2 n^2 }\sum_{k=1}^n \EE ((M_{k+1}-M_{k})^4 \mid \GG_k) \to 0.
$$
Then \cite[Corollary 2.1.10]{MR1485774} implies that conditionally on $\GG_0$,
\begin{equation}
  \label{martMnCLTGo}
 \frac{M_n}{\sqrt{n}} \convdis \mathcal{N}\left(0, 1\right).
\end{equation}
Now fix $\varepsilon \in (0,1/2)$. On $E^{(2)}$, we can recursively define an infinite sequence of random variables $(\tau_j)_{j\in \NN}$ by setting $\tau_0:=0$ and 
$$
\tau_{j+1}:=\inf\left\{n>\tau_j: \frac{1}{\sqrt{4n \log \log n}}  \sum_{i=1}^{n-1}\frac{S_i^{(2)}}{i}  > 1-\varepsilon \right\}, \quad j\in \NN.
$$
We note that $(\tau_j)_{j\in \NN}$ are measurable with respect to $\GG_0$. Thus, by (\ref{martMnCLTGo}), for any $k \geq 1$,  
$$
\begin{aligned}
  &\quad \ \PP\left( \bigcup_{n\geq k} \left\{\frac{M_{\tau_n}}{\sqrt{2\tau_n \log \log \tau_n}} \geq -\varepsilon \right\} \mid \GG_0  \right) \mathds{1}_{E^{(2)}}\\ &\geq \lim_{n\to \infty} \PP\left( \frac{M_{\tau_n}}{\sqrt{\tau_n }} \geq -\varepsilon \sqrt{2\log \log \tau_n}  \mid \GG_0  \right) \mathds{1}_{E^{(2)}} =\mathds{1}_{E^{(2)}},
\end{aligned}
$$
which implies that 
$$
\PP\left( \bigcap_{k=1}\bigcup_{n\geq k} \left\{\frac{M_{\tau_n}}{\sqrt{2\tau_n \log \log \tau_n}} \geq -\varepsilon \right\} \mid \GG_0  \right) \mathds{1}_{E^{(2)}}=\mathds{1}_{E^{(2)}}.
$$
This shows that a.s. on $E^{(2)}$, there exist an infinite subsequence $(\tau_{n_k})_{k\in \NN}$ of $(\tau_n)_{n\in \NN}$ such that $M_{\tau_{n_k}}\geq -\varepsilon \sqrt{2\tau_{n_k} \log \log \tau_{n_k}}$, and in particular, if we assume that $\alpha_1=\sqrt{2}/2$, then almost surely on $E^{(2)}$,
$$
S_{\tau_{n_k}}^{(1)}=M_{\tau_{n_k}}+\sum_{i=1}^{\tau_{n_k}-1}\frac{ \sqrt{2} S_i^{(2)}}{2 i} \geq  (1-2\varepsilon)\sqrt{2\tau_{n_k} \log \log \tau_{n_k}},
$$
which proves (\ref{LILalpha0}) for $\alpha_1=\sqrt{2}/2$. The case $\alpha_1=-\sqrt{2}/2$ is proved similarly. 

By symmetry, one can show that almost surely,
$$
 \liminf_{n\to\infty} \frac{S_n^{(1)}}{\sqrt{2n \log \log n}} \leq -  C(\alpha_1), 
$$
and thus, almost surely, one has $\liminf S_n^{(1)}=-\infty$ and $\limsup S_n^{(1)}=\infty$, 
which implies that $S^{(1)}$ is recurrent.
\end{proof}

It is known that the ERW, or more generally, the step-reinforced random walk, has a connection to random recursive trees, see \cite{MR4237267, MR3827299, qin2024recurrence}. Similarly, we can relate $S^{(1)}$ with a random recursive tree and write $S^{(1)}_n$ as a randomly weighted sum. More precisely, we can give a second formulation of $S^{(1)}$ as follows:
\begin{itemize}
    \item Assume that $X_1^{(1)},X_1 \in \{-1,1\}$, and let $(X_n)_{n\geq 2}$ be i.i.d. Rademacher random variables with parameter $1/2$, which can be interpreted as the step sequence of the simple random walk $(S^{(2)}_n)_{n\geq 1}$.
    \item For each $n\geq k\geq 1$, let 
    $$
d_k(n):=\# \{j \in (k, n]: u_j^{(2)}=k\}
    $$
    with the convention that $d_n(n)=0$, where $(u_j^{(2)})_{j\geq 1}$ are independent random variables and $u_j^{(2)}$ is uniformly distributed on $\{1,2,\dots, j\}$. The random variable $d_k(n)$ counts the total number of times $S^{(1)}$ chooses $X_k$ (either repeat it or move in its reversed direction) up to time $n$. One can also interpret $(d_k(n))_{n\geq k}$ as the out-degrees of the vertex with label $k$ in a (growing) random recursive tree.
    \item Let $(\tilde{S}^{(k)})_{k\geq 1}$ be a sequence of independent biased random walks where for each $k\geq 1$, the random variables $\tilde{S}^{(k)}_1$, $\tilde{S}^{(k)}_2-\tilde{S}^{(k)}_1$, $\tilde{S}^{(k)}_3-\tilde{S}^{(k)}_2$, $\ldots$ are i.i.d. Rademacher random variables with parameter $p_1$. We use $\tilde{S}^{(k)}$ to determine the directions of $X_k$ in the step sequence $(X_n^{(1)})_{n\geq 2}$. More precisely, for $n\geq 1$, by a slight abuse of notation, we let
 \begin{equation}
  \label{defSn1secondway}
S_n^{(1)}:=X_1^{(1)}+\sum_{k=1}^n \tilde{S}^{(k)}_{d_n(k)} X_k.
 \end{equation}
 One can easily check by induction that $S^{(1)}$ defined by (\ref{defSn1secondway}) has the same distribution as the walk $S^{(1)}$ in Theorem \ref{thmsrw}. We shall not differentiate these two constructions of $S^{(1)}$ in the rest of the paper. We note that $(X_k)_{k\geq 1}$, $(d_n(k))_{n\geq k\geq 1}$ and $(\tilde{S}^{(k)})_{k\geq 1}$ are mutually independent.
\end{itemize}

We recall the following central limit theorem for martingale triangular arrays, see e.g. \cite[Corollary 3.1]{MR0624435}.

\begin{proposition}
\label{cltmartarray}
Suppose that for each $n\geq 1$, we have random variables $X_{n ,1}, \ldots, X_{n, n}$ on a probability space  $(\Omega, \mathcal{F}, \PP)$, with sub $\sigma$-fields $\mathcal{F}_{n,0} \subset \mathcal{F}_{n,1} \subset \cdots \subset \mathcal{F}_{n, n}$ of $\mathcal{F}$ such that $X_{n, k}$ is $\mathcal{F}_{n, k}$-measurable and $\EE\left(X_{n, k} \mid \mathcal{F}_{n, k-1}\right)=0$ a.s. for $k=1,2, \ldots, n$. Such an array is called a martingale triangular array. Let $S_n=X_{n, 1}+\cdots+X_{n, n}$ for $n\geq 1$. If 
\begin{equation}
\label{covinprob}
\sum_{k=1}^n \EE\left(X_{n, k}^2 \mid \mathcal{F}_{n, k-1}\right) \longrightarrow  1\quad \text{in probability as } n\to \infty,
\end{equation}
and for any $\varepsilon >0$,
\begin{equation}
\label{negcond}
\sum_{k=1}^n \EE\left(X_{n, k}^2 \mathds{1}_{\{|X_{n, k}| \geq \varepsilon\}} \mid \mathcal{F}_{n, k-1}\right) \longrightarrow 0\quad \text{in probability as } n\to \infty.
\end{equation}
Then $S_n$ converges in law as $n\to \infty$ to $\mathcal{N}(0,1)$. 
\end{proposition}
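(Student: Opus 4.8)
The plan is to follow the classical characteristic-function argument of McLeish, as presented in \cite{MR0624435}. Fix $t\in\RR$; it suffices to show $\EE[e^{\mathrm{i}tS_n}]\to e^{-t^2/2}$. I would first derive two consequences of the hypotheses. \emph{(a) Negligibility of the maximal increment:} $\max_{1\le k\le n}|X_{n,k}|\to0$ in probability. Indeed $\{\max_k|X_{n,k}|>\delta\}\subseteq\{\sum_kX_{n,k}^2\mathds{1}_{\{|X_{n,k}|>\delta\}}>\delta^{2}\}$, and the nondecreasing process $j\mapsto\sum_{k\le j}X_{n,k}^2\mathds{1}_{\{|X_{n,k}|>\delta\}}$ has predictable compensator $j\mapsto\sum_{k\le j}\EE(X_{n,k}^2\mathds{1}_{\{|X_{n,k}|>\delta\}}\mid\mathcal{F}_{n,k-1})$, whose terminal value is the conditional Lindeberg sum in \eqref{negcond}; Lenglart's inequality therefore gives $\PP(\max_k|X_{n,k}|>\delta)\le\eta\delta^{-2}+\PP(\text{Lindeberg sum}\ge\eta)$ for every $\eta>0$, and letting $n\to\infty$ and then $\eta\downarrow0$ proves the claim. \emph{(b) Self-averaging of the squared increments:} $\sum_{k=1}^nX_{n,k}^2\to1$ in probability. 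Here I would write $\sum_kX_{n,k}^2-\sum_k\EE(X_{n,k}^2\mid\mathcal{F}_{n,k-1})$ as a martingale and truncate its increments at level $\varepsilon$: the truncated martingale has predictable quadratic variation at most $\varepsilon^{2}\sum_k\EE(X_{n,k}^2\mid\mathcal{F}_{n,k-1})=O_{\PP}(\varepsilon^2)$, hence is $O_{\PP}(\varepsilon)$ by the same maximal inequality, while the discarded part is again dominated by a conditional Lindeberg sum; combining with \eqref{covinprob} and letting $\varepsilon\downarrow0$ gives the claim.

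Next I would truncate the array, solely to make the quantities below integrable. Replace $X_{n,k}$ by $X_{n,k}^{*}:=X_{n,k}\mathds{1}_{\{|X_{n,k}|\le\varepsilon\}}-\EE(X_{n,k}\mathds{1}_{\{|X_{n,k}|\le\varepsilon\}}\mid\mathcal{F}_{n,k-1})$, and then stop the new array at $\sigma_n:=\min\{j\le n:\ \sum_{i\le j}(X_{n,i}^{*})^{2}>3\}$, with $\sigma_n:=n$ if the set is empty; the resulting array $X_{n,k}^{**}:=X_{n,k}^{*}\mathds{1}_{\{k\le\sigma_n\}}$ is still a martingale-difference array (because $\{k\le\sigma_n\}\in\mathcal{F}_{n,k-1}$), has increments bounded by $2\varepsilon$, and satisfies $\sum_k(X_{n,k}^{**})^{2}\le4$, both almost surely. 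Using (a), (b) and \eqref{negcond} one checks that $S_n^{**}:=\sum_kX_{n,k}^{**}$ differs from $S_n$ by $o_{\PP}(1)$ and that \eqref{covinprob}--\eqref{negcond} persist for the truncated array; so it is enough to prove $\EE[e^{\mathrm{i}tS_n^{**}}]\to e^{-t^2/2}$ and then to let $\varepsilon\downarrow0$ at the very end.

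For the truncated array I would use the elementary expansion $e^{\mathrm{i}y}=(1+\mathrm{i}y)\exp(-y^{2}/2+r(y))$ with $|r(y)|\le c|y|^{3}$ for $|y|$ small, applied (once $\varepsilon$ is small) to each $y=tX_{n,k}^{**}$, which gives
$$
e^{\mathrm{i}tS_n^{**}}=T_n\exp\Big(-\tfrac{t^{2}}{2}\sum_{k=1}^{n}(X_{n,k}^{**})^{2}+R_n\Big),\qquad T_n:=\prod_{k=1}^{n}\big(1+\mathrm{i}tX_{n,k}^{**}\big),\quad R_n:=\sum_{k=1}^{n}r\big(tX_{n,k}^{**}\big),
$$
where $|R_n|\le c|t|^{3}(\max_k|X_{n,k}^{**}|)\sum_k(X_{n,k}^{**})^{2}\le8c|t|^{3}\varepsilon$. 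Thus, as $n\to\infty$, the exponential factor converges in probability to $e^{-t^{2}/2}$ up to an $O(\varepsilon)$ error, and it is bounded in modulus by $1$ since $|e^{-y^{2}/2+r(y)}|=|1+\mathrm{i}y|^{-1}\le1$. On the other hand $|T_n|^{2}=\prod_k(1+t^{2}(X_{n,k}^{**})^{2})\le\exp\big(t^{2}\sum_k(X_{n,k}^{**})^{2}\big)\le e^{4t^{2}}$, so $\{T_n\}$ is bounded in $L^{2}$, hence uniformly integrable, while $\EE[T_n]=1$ exactly because $\EE(1+\mathrm{i}tX_{n,k}^{**}\mid\mathcal{F}_{n,k-1})=1$ allows one to peel the factors off one at a time (every partial product is bounded by $|T_n|\le e^{2t^{2}}$). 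Writing
$$
\EE[e^{\mathrm{i}tS_n^{**}}]-e^{-t^{2}/2}=\EE\Big[T_n\Big(\exp\big(-\tfrac{t^{2}}{2}\sum_k(X_{n,k}^{**})^{2}+R_n\big)-e^{-t^{2}/2}\Big)\Big],
$$
the right-hand side is $o_n(1)+O(\varepsilon)$ by uniform integrability of $\{T_n\}$ together with the bounded, in-probability convergence of the bracket; sending first $n\to\infty$ and then $\varepsilon\downarrow0$ completes the proof.

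The step I expect to be the main obstacle is the integrability bookkeeping of the middle paragraph: under only second moments on the $X_{n,k}$ and only in-probability control of $\sum_k\EE(X_{n,k}^2\mid\mathcal{F}_{n,k-1})$, neither $T_n$ nor $\exp(-\tfrac{t^{2}}{2}\sum_kX_{n,k}^{2})$ need be integrable a priori, so the truncation-and-stopping cannot be avoided, and the delicate point is to verify that it perturbs $S_n$ only negligibly and preserves \eqref{covinprob}--\eqref{negcond} — which is precisely where consequences (a) and (b) are used.
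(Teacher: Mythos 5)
The paper does not actually prove this proposition: it is quoted as a known black-box result, with a pointer to \cite[Corollary 3.1]{MR0624435} (Hall and Heyde), and is then only \emph{applied}. So there is no internal proof to compare against; what you have written is a reconstruction of the proof of the cited result itself, and it is essentially the correct one — McLeish's characteristic-function method, which is exactly how Hall and Heyde prove it. Your two preliminary reductions are sound: the inclusion $\{\max_k|X_{n,k}|>\delta\}\subseteq\{\sum_kX_{n,k}^2\mathds{1}_{\{|X_{n,k}|>\delta\}}>\delta^2\}$ plus Lenglart's domination inequality does give $\max_k|X_{n,k}|\to0$ in probability, and the truncate-and-compensate argument for $\sum_kX_{n,k}^2\to1$ is the standard equivalence of the two normalization conditions (Hall--Heyde, Theorem 2.23). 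The truncation/stopping construction is correctly set up ($\{k\le\sigma_n\}\in\mathcal{F}_{n,k-1}$, increments bounded by $2\varepsilon$, $\sum_k(X_{n,k}^{**})^2\le 4$), the identity $e^{\mathrm{i}y}=(1+\mathrm{i}y)\exp(-y^2/2+r(y))$ gives $|\exp(-\tfrac{t^2}{2}\sum_k(X_{n,k}^{**})^2+R_n)|=\prod_k|1+\mathrm{i}tX_{n,k}^{**}|^{-1}\le1$ as you use, and $\EE[T_n]=1$ follows by peeling since the partial products are uniformly bounded. The only places where you wave your hands — transferring $\sum_k(X_{n,k}^{*})^2\to1$ to the starred array (the centering terms are controlled by $\bigl|\EE(X_{n,k}\mathds{1}_{\{|X_{n,k}|>\varepsilon\}}\mid\mathcal{F}_{n,k-1})\bigr|\le\varepsilon^{-1}\EE(X_{n,k}^2\mathds{1}_{\{|X_{n,k}|>\varepsilon\}}\mid\mathcal{F}_{n,k-1})$, whose sum is the Lindeberg quantity) and showing $S_n-S_n^{**}=o_{\PP}(1)$ — are routine and fillable exactly as you indicate. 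In short: correct proof, same route as the source the paper cites; the paper itself simply skips it.
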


\begin{proposition}
  \label{semomsumconv}
As $n\to \infty$,
$$\lim_{n\to \infty}\frac{\sum_{k=1}^n (\tilde{S}^{(k)}_{d_n(k)})^2}{n(1+2\alpha_1^2)} =1 \quad \text{ in probability}.$$
\end{proposition}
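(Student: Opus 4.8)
The plan is to estimate the first two moments of $V_n:=\sum_{k=1}^n(\tilde S^{(k)}_{d_n(k)})^2$ and conclude by Chebyshev's inequality. Writing $\Sigma_n:=\sum_{k=1}^n d_n(k)$ and $Q_n:=\sum_{k=1}^n d_n(k)^2$, it suffices to prove that $\EE[V_n]=(1+2\alpha_1^2)n+o(n)$ and $\Var(V_n)=o(n^2)$, since together these force $V_n/\bigl((1+2\alpha_1^2)n\bigr)\to 1$ in probability.

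For the first moment I would condition on the out-degree vector $(d_n(k))_{1\le k\le n}$. Since $(\tilde S^{(k)})_{k\ge1}$ is independent of the underlying tree and each $\tilde S^{(k)}$ is a walk with $\operatorname{Rademacher}(p_1)$ increments (mean $\alpha_1$, second moment $1$), one has $\EE[(\tilde S^{(k)}_m)^2]=\alpha_1^2 m^2+(1-\alpha_1^2)m$, hence
\[
\EE[V_n]=\alpha_1^2\,\EE[Q_n]+(1-\alpha_1^2)\,\EE[\Sigma_n].
\]
Each $d_n(k)$ is a sum of independent Bernoulli variables with parameters $1/(k+1),\dots,1/n$, so $\EE[d_n(k)]=H_n-H_k$ and $\EE[d_n(k)^2]=(H_n-H_k)^2+O(H_n-H_k)$. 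Combining this with $\sum_k(H_n-H_k)=n-H_n$ and the asymptotics $\sum_k(H_n-H_k)^2\sim 2n$ recorded in Section~2, one gets $\EE[\Sigma_n]\sim n$ and $\EE[Q_n]\sim 3n$, whence $\EE[V_n]=(1-\alpha_1^2)n+3\alpha_1^2 n+o(n)=(1+2\alpha_1^2)n+o(n)$.

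For the variance I would again condition on the tree and use the law of total variance. Conditionally on $(d_n(k))_k$ the summands $(\tilde S^{(k)}_{d_n(k)})^2$ are independent in $k$, and a short fourth-moment computation for a biased walk shows that the leading $m^4$ terms cancel, giving $\Var\bigl((\tilde S^{(k)}_m)^2\bigr)\le C(\alpha_1)\,m^3$; since $\EE[d_n(k)^3]=O\bigl((H_n-H_k)^3+(H_n-H_k)\bigr)$ and $\sum_k(H_n-H_k)^3\sim 6n$, the expected conditional variance is $O(n)$. It remains to bound $\Var\bigl(\alpha_1^2 Q_n+(1-\alpha_1^2)\Sigma_n\bigr)$. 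Here $\Sigma_n=\sum_{j=2}^n\mathds{1}_{\{u_j^{(2)}\ne j\}}$ is a sum of independent indicators, so $\Var(\Sigma_n)=O(\log n)$, and the crux is to show $\Var(Q_n)=o(n^2)$; in fact I claim $\Var(Q_n)=O(n)$. Adding the $(n{+}1)$-st vertex gives the recursion $Q_{n+1}=Q_n+\bigl(2d_n(u_{n+1}^{(2)})+1\bigr)\mathds{1}_{\{u_{n+1}^{(2)}\le n\}}$, so with $\mathcal{U}_n:=\sigma(u_2^{(2)},\dots,u_n^{(2)})$ we get $\EE[Q_{n+1}-Q_n\mid\mathcal{U}_n]=\frac{2\Sigma_n+n}{n+1}$ and $\EE[(Q_{n+1}-Q_n)^2\mid\mathcal{U}_n]=\frac{4Q_n+4\Sigma_n+n}{n+1}$. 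Passing to the compensated $(\mathcal{U}_n)$-martingale $N_n:=Q_n-\sum_{j=1}^{n-1}\frac{2\Sigma_j+j}{j+1}$ (which satisfies $N_1=0$), orthogonality of its increments gives
\[
\Var(N_n)=\sum_{j=1}^{n-1}\EE\bigl[\Var(Q_{j+1}-Q_j\mid\mathcal{U}_j)\bigr]\le\sum_{j=1}^{n-1}\frac{4\EE[Q_j]+4\EE[\Sigma_j]+j}{j+1}=O(n),
\]
using $\EE[Q_j]=O(j)$ and $\EE[\Sigma_j]=O(j)$ from the first-moment step. Finally $\sum_{j=1}^{n-1}\frac{2\Sigma_j+j}{j+1}$ is a deterministic sum plus $2\sum_{l=2}^{n-1}\mathds{1}_{\{u_l^{(2)}\ne l\}}(H_n-H_l)$, a sum of independent terms of variance $O((\log n)^3)$; hence $\Var(Q_n)=O(n)$, so $\Var(V_n)=O(n)=o(n^2)$, completing the argument.

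I expect the main obstacle to be the estimate $\Var(Q_n)=\Var\bigl(\sum_k d_n(k)^2\bigr)=o(n^2)$, i.e.\ the concentration of the sum of squared out-degrees in a random recursive tree. The recursive martingale decomposition above reduces it to the elementary moment bound $\EE[Q_j]=O(j)$, but one has to be careful to peel off the (lower-order) contribution of $\Sigma_j$ before the orthogonality argument applies cleanly.
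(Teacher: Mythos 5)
Your argument is correct, and while it shares the overall skeleton with the paper's proof (condition on the out-degree vector, compute the conditional mean and variance of $\sum_k (\tilde S^{(k)}_{d_n(k)})^2$, and finish with Chebyshev), it handles the key concentration step quite differently. The paper controls $Q_n=\sum_k d_n(k)^2$ by invoking the almost sure convergence of the degree profile of the random recursive tree ($Y_{n,i}/n\to 2^{-i-1}$, an external result), upgrading to $L^1$ convergence of $Q_n/n\to 3$ via a truncation and Scheff\'e's lemma, and then works on a high-probability event $E_n(\varepsilon)$ on which it also imposes the known bound $d_n^{*}\lesssim\log_2 n$ on the maximum out-degree in order to tame $\sum_k d_n(k)^3\le d_n^{*}Q_n$. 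You instead prove the quantitative bound $\Var(Q_n)=O(n)$ directly from the one-step growth recursion $Q_{n+1}=Q_n+(2d_n(U)+1)\mathds{1}_{\{U\le n\}}$ via martingale orthogonality, and you bound $\EE\sum_k d_n(k)^3=O(n)$ by treating each $d_n(k)$ as a sum of independent Bernoullis with Poisson-like moments ($\sum_k(H_n-H_k)^3\sim 6n$), so you never need the degree-profile limit theorem or the max-degree estimate. This makes your route more self-contained and gives a rate ($\Var(V_n)=O(n)$, hence $\PP(|V_n-\EE V_n|\ge\varepsilon n)=O(n^{-1})$), at the cost of the extra recursion bookkeeping; the paper's route is shorter given the cited structural facts about random recursive trees. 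Two cosmetic points: in the paper's convention $\sum_k d_n(k)=n-1$ is deterministic, so your $\Var(\Sigma_n)=O(\log n)$ term and the $(\log n)^3$ contribution from the compensator are not even needed; and your harmonic-number indexing ($H_n-H_k$ versus the paper's $H_{n-1}-H_{k-1}$) differs from the paper's but is immaterial to the asymptotics.
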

The proof of Proposition \ref{semomsumconv} will be given later. Assuming Proposition \ref{semomsumconv}, we can now prove Theorem \ref{thmsrw} (ii).

\begin{proof}[Proof of Theorem \ref{thmsrw} (ii)]
We let $$\mathcal{F}_{n,0}:=\sigma(d_n(1),d_n(2),\dots, d_n(n), \tilde{S}^{(1)},\tilde{S}^{(2)},\dots,\tilde{S}^{(n)}),$$ 
and for $k=1,2,\dots,n$, we define
$$\mathcal{F}_{n,k}:=\sigma(d_n(1),d_n(2),\dots, d_n(n), \tilde{S}^{(1)},\tilde{S}^{(2)},\dots,\tilde{S}^{(n)}, X_1,X_2,\dots, X_k).$$
Clearly $\mathcal{F}_{n, 0} \subset \mathcal{F}_{n, 1} \subset \cdots \subset \mathcal{F}_{n, n}$ and $\tilde{S}^{(k)}_{d_n(k)} X_k$ is $\mathcal{F}_{n, k}$-measurable. Moreover, because of the independence, 
$$
\EE\left(\tilde{S}^{(k)}_{d_n(k)} X_k \mid \mathcal{F}_{n, k-1}\right)=\tilde{S}^{(k)}_{d_n(k)}\EE\left( X_k \mid \mathcal{F}_{n, k-1}\right)=0.
$$
Thus, for each $n$, $\left\{\tilde{S}^{(k)}_{d_n(k)}X_k, \mathcal{F}_{n k}, 1 \leqq k \leqq n\right\}$ is a sequence of martingale differences. We now check Conditions (\ref{covinprob}) and (\ref{negcond}) with 
$$X_{n,k}:=\frac{\tilde{S}^{(k)}_{d_n(k)}X_k}{\sqrt{n(1+2\alpha_1^2)}}, \quad k=1,2,\dots,n.$$ 
One can deduce (\ref{covinprob}) from Proposition \ref{semomsumconv}. Now let $d_n^{*}=\max\{d_n(1),d_n(2),\dots, d_n(n)\}$. Note that $|\tilde{S}^k_n|\leq n$ and $|X_k|=1$. One has
$$
\begin{aligned}
&\quad\ \sum_{k=1}^n \EE\left(\frac{(\tilde{S}^{(k)}_{d_n(k)}X_k)^2}{n(1+2\alpha_1^2)} \mathds{1}_{\left\{\left|\frac{\tilde{S}^{(k)}_{d_n(k)}X_k}{\sqrt{n(1+2\alpha_1^2)}}\right| \geq \varepsilon\right\}} \mid \mathcal{F}_{n, k-1}\right) \\
&\leq \sum_{k=1}^n \EE\left(\frac{(\tilde{S}^{(k)}_{d_n(k)})^2}{n(1+2\alpha_1^2)} \mathds{1}_{\left\{\frac{d_n^{*}}{\sqrt{n(1+2\alpha_1^2)}} \geq \varepsilon\right\}} \mid \mathcal{F}_{n, k-1}\right) \\
&= \mathds{1}_{\left\{\frac{d_n^{*}}{\sqrt{n(1+2\alpha_1^2)}} \geq \varepsilon\right\}} \sum_{k=1}^n \frac{(\tilde{S}^{(k)}_{d_n(k)})^2}{n(1+2\alpha_1^2)} \longrightarrow 0\quad \text{in probability as } n\to \infty,
\end{aligned}
$$
where in the last step we used that $d_n^{*}/ \log_2(n) \to 1$ in probability \cite{MR1346281}. This completes the proof of (\ref{negcond}). The desired result now follows from (\ref{defSn1secondway}) and Proposition \ref{cltmartarray}. 
\end{proof}

To prove Proposition \ref{semomsumconv}, we need the following two lemmas.

\begin{lemma}
  \label{secmosuS}
  $$\sum_{k=1}^{n}\EE (\tilde{S}^{(k)}_{d_n(k)})^2 = n(1+2\alpha_1^2)+ O(\log^2 n). $$
\end{lemma}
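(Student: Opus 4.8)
The plan is to compute both sides directly, exploiting that $(d_n(k))_{1\le k\le n}$ and $(\tilde{S}^{(k)})_{k\ge 1}$ are independent and that each of them is built from independent coin flips. First I would record the deterministic identity
\[
\EE\bigl((\tilde{S}^{(k)}_m)^2\bigr)=(1-\alpha_1^2)\,m+\alpha_1^2\,m^2,\qquad m\ge 0,
\]
which is immediate because $\tilde{S}^{(k)}_m$ is a sum of $m$ i.i.d.\ Rademacher variables of parameter $p_1$, each of mean $\alpha_1$ and variance $1-\alpha_1^2$. Conditioning on $d_n(k)$ and using the independence of $\tilde{S}^{(k)}$ from $d_n(k)$ then upgrades this to
\[
\EE\bigl((\tilde{S}^{(k)}_{d_n(k)})^2\bigr)=(1-\alpha_1^2)\,\EE(d_n(k))+\alpha_1^2\,\EE(d_n(k)^2).
\]

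Next I would compute the two degree moments. Writing $d_n(k)=\sum_{j=k+1}^n\mathds{1}_{\{u_j^{(2)}=k\}}$ as a sum of independent Bernoulli$(1/j)$ variables gives $\EE(d_n(k))=H_n-H_k$ and $\Var(d_n(k))=\sum_{j=k+1}^n\tfrac1j(1-\tfrac1j)$, hence $\EE(d_n(k)^2)=(H_n-H_k)^2+\sum_{j=k+1}^n\tfrac1j(1-\tfrac1j)$. Summing over $k$ I then need three elementary estimates: $\sum_{k=1}^n(H_n-H_k)=n-H_n$ (using $\sum_{k=1}^nH_k=(n+1)H_n-n$); $\sum_{k=1}^n\sum_{j=k+1}^n\tfrac1j(1-\tfrac1j)=\sum_{j=2}^n\tfrac{(j-1)^2}{j^2}=n+O(\log n)$, after exchanging the order of summation; and $\sum_{k=1}^n(H_n-H_k)^2=2n+O(\log^2 n)$. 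Granting these, $\sum_{k=1}^n\EE(d_n(k))=n+O(\log n)$ and $\sum_{k=1}^n\EE(d_n(k)^2)=3n+O(\log^2 n)$, so the left-hand side of the lemma equals $(1-\alpha_1^2)n+3\alpha_1^2 n+O(\log^2 n)=(1+2\alpha_1^2)n+O(\log^2 n)$.

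The one slightly delicate point — and the source of the $O(\log^2 n)$ error — is the estimate $\sum_{k=1}^n(H_n-H_k)^2=2n+O(\log^2 n)$. I would prove it by writing $(H_n-H_k)^2=\bigl(\sum_{i=k+1}^n\tfrac1i\bigr)\bigl(\sum_{j=k+1}^n\tfrac1j\bigr)$, expanding, and exchanging the order of summation to obtain $\sum_{k=1}^n(H_n-H_k)^2=\sum_{i,j=2}^n\tfrac{\min(i,j)-1}{ij}$. Splitting into the off-diagonal part (twice, by symmetry in $i,j$) and the diagonal part, the off-diagonal sum equals $2\sum_{i=2}^n(1-\tfrac1i)(H_n-H_i)=2\sum_{i=2}^n(H_n-H_i)-2\sum_{i=2}^n\tfrac{H_n-H_i}{i}$, whose first term is $2n+O(\log n)$ by the telescoping identity above and whose second term is nonnegative and bounded by $2H_n\sum_{i=2}^n\tfrac1i=O(\log^2 n)$; the diagonal sum $\sum_{i=2}^n\tfrac{i-1}{i^2}=O(\log n)$ is negligible. (Alternatively, one may compare $\sum_{k=1}^n\log^2(n/k)$ with $n\int_0^1\log^2(1/t)\,dt=2n$, the Riemann-sum error being controlled by the $k=1$ term $\log^2 n$.) I do not expect a genuine obstacle here: the whole argument is bookkeeping with harmonic sums, and the only care needed is in tracking the $O(\log^2 n)$ error term through the double sum.
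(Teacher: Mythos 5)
Your proof is correct and follows essentially the same route as the paper's: condition on the degree sequence, use $\EE(\tilde{S}^{(k)}_m)^2=(1-\alpha_1^2)m+\alpha_1^2 m^2$, and reduce everything to $\sum_{k=1}^n\EE d_n(k)^2=3n+O(\log^2 n)$. The only cosmetic differences are that you derive the degree moments from the Bernoulli decomposition rather than citing the known random-recursive-tree formulas, and you evaluate $\sum_k(H_n-H_k)^2$ by an exact double-sum exchange where the paper compares $H_{n-1}-H_{k-1}$ with $\log(n/k)$ and sums $\log^2(n/k)$ — your parenthetical alternative is exactly the paper's computation.
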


\begin{proof}
  For the biased random walk $\tilde{S}^{(1)}$, one has 
  $$
 \EE (\tilde{S}^{(1)}_n)^2= 4np_1(1-p_1) + n^2\alpha_1^2, \quad n\in \N.
  $$
 Thus, let $\GG_n:=\sigma(d_n(1),d_n(2),\dots,d_n(n))$. Using that $\sum_{k=1}^n d_n(k)=n-1$, one has 
 \begin{equation}
  \label{secmomsumS}
  \begin{aligned}
\sum_{k=1}^n\EE\left( (\tilde{S}^{(k)}_{d_n(k)})^2 \mid \GG_n\right) &= 4(n-1)p_1(1-p_1) + \alpha_1^2\sum_{k=1}^n d_n^2(k) \\
&=n-4p_1(1-p_1)-\alpha_1^2n + \alpha_1^2\sum_{k=1}^n d_n^2(k).  
\end{aligned}
 \end{equation}
 Recall that $d_k(n)$ is the out-degree of the vertex with label $k$ in a random recursive tree with $n$ vertices. It is known that, see e.g. \cite[Theorem 14.9]{MR3675279}, 
$$
\EE d_n(k)= H_{n-1}-H_{k-1}, \quad \operatorname{Var}(d_n(k))=H_{n-1}-H_{k-1}-\zeta_{n-1}(2)+\zeta_{k-1}(2),
$$
where $H_0:=0$ and $H_n:=\sum_{i=1}^{n} 1/i$ for $n\geq 1$, and 
$$
\zeta_0(2):=0,\quad \zeta_n(2)=\sum_{j=1}^n\frac{1}{j^2}, \quad n\geq 1.
$$
 Therefore, 
$$
\EE d^2_n(k)=(H_{n-1}-H_{k-1})^2+H_{n-1}-H_{k-1}-\zeta_{n-1}(2)+\zeta_{k-1}(2),\quad n\geq k\geq 1.
$$
Now observe that 
$$
 \log n-\log k= \int_k^n \frac{1}{x} dx \leq H_{n-1}-H_{k-1} \leq \int_{k}^{n-1} \frac{1}{x} dx +\frac{1}{k} =\log (n-1) -\log k +\frac{1}{k}.  
$$
and similarly.
$$
 \frac{1}{k}-\frac{1}{n} \leq  \zeta_{n-1}(2)-\zeta_{k-1}(2) \leq  \frac{1}{k}-\frac{1}{n-1}+ \frac{1}{k^2}.
$$
Using that
$$
0\leq \log (n-1) -\log k +\frac{1}{k}- (\log n-\log k) \leq \frac{1}{k},
$$
and 
$$
0\leq (\log (n-1) -\log k +\frac{1}{k})^2-(\log n-\log k)^2 \leq \frac{2(\log (n-1)-\log k)}{k} +\frac{1}{k^2},
$$
one can easily check that
$$
\sum_{k=1}^{n} \left(H_{n-1}-H_{k-1}-\log n+\log k\right) =O(\log n).
$$
and 
$$
\sum_{k=1}^{n} \left((H_{n-1}-H_{k-1})^2-(\log n-\log k)^2\right) =O(\log n).
$$
Consequently, 
$$
\sum_{k=1}^{n} \EE d^2_n(k)=  \sum_{k=1}^{n} (\log n-\log k)^2 + \sum_{k=1}^{n} (\log n-\log k) +O(\log n).
$$
One can then easily check that
\begin{equation}
\label{sumexped2}
\sum_{k=1}^{n} \EE d^2_n(k)=3n+O(\log^2 n).
\end{equation}
Combined with (\ref{secmomsumS}), this implies the desired result.
\end{proof}

\begin{lemma}
  \label{dnkinL1con}
  For $n\geq 1$, let 
$$T_n := \frac{1}{n} \sum_{k=1}^n d^2_n(k).$$
Then $(T_n)_{n\geq 1}$ converges to $3$ a.s. and in $L^1$.
\end{lemma}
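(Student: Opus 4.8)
The plan is to exhibit $A_n:=\sum_{k=1}^n d_n^2(k)=nT_n$ as a perturbed martingale and then apply a martingale strong law of large numbers. I would work with the natural filtration $(\mathcal{G}_n)_{n\ge 1}$ of the growing random recursive tree, under which passing from $\mathcal{G}_n$ to $\mathcal{G}_{n+1}$ amounts to attaching vertex $n+1$ to a vertex $V_{n+1}$ uniform on $\{1,\dots,n\}$ and independent of $\mathcal{G}_n$; thus $d_{n+1}(k)=d_n(k)+\mathds{1}_{\{V_{n+1}=k\}}$ for $k\le n$, $d_{n+1}(n+1)=0$, and $\sum_{k=1}^n d_n(k)=n-1$. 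Expanding the square gives the recursion $A_{n+1}=A_n+2d_n(V_{n+1})+1$, and since $\EE[d_n(V_{n+1})\mid\mathcal{G}_n]=\frac1n\sum_{k=1}^n d_n(k)=\frac{n-1}{n}$, one gets $\EE[A_{n+1}-A_n\mid\mathcal{G}_n]=3-\frac2n$. Hence $M_n:=A_n-\sum_{j=1}^{n-1}\bigl(3-\tfrac2j\bigr)$ is a $(\mathcal{G}_n)$-martingale, and $\frac1n\sum_{j=1}^{n-1}(3-\tfrac2j)=3-\frac{3}{n}-\frac{2H_{n-1}}{n}\to 3$.

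Next I would bound the conditional second moments of the increments. From the recursion, $(M_{n+1}-M_n)^2\le 8 d_n(V_{n+1})^2+8$, so
$$
\EE\bigl[(M_{n+1}-M_n)^2\mid\mathcal{G}_n\bigr]\le \frac8n\sum_{k=1}^n d_n^2(k)+8=8T_n+8=\frac{8A_n}{n}+8.
$$
Feeding in the a priori first-moment bound $\EE A_n=\sum_{k}\EE d_n^2(k)=3n+O(\log^2 n)$ from Lemma \ref{secmosuS} (equivalently, obtained directly by iterating the conditional expectation above), Tonelli's theorem gives $\EE\sum_{n\ge 1}n^{-2}\EE[(M_{n+1}-M_n)^2\mid\mathcal{G}_n]\le \sum_{n\ge 1}\bigl(8\EE A_n/n+8\bigr)n^{-2}<\infty$, hence $\sum_{n}n^{-2}\EE[(M_{n+1}-M_n)^2\mid\mathcal{G}_n]<\infty$ almost surely. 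By the martingale strong law — concretely, the martingale $\sum_{j}j^{-1}(M_{j+1}-M_j)$ then has almost surely finite quadratic variation, so it converges a.s., and Kronecker's lemma forces $M_n/n\to 0$ a.s. — one concludes $T_n=M_n/n+\frac1n\sum_{j=1}^{n-1}(3-\tfrac2j)\to 3$ almost surely.

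For the $L^1$ convergence I would invoke Scheffé's lemma: $T_n\ge 0$, $T_n\to 3$ almost surely, and $\EE T_n\to 3$ by Lemma \ref{secmosuS}, so $T_n\to 3$ in $L^1$.

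The only genuine difficulty is upgrading an in-probability or $L^1$ limit to an almost sure one; a direct attack through $\operatorname{Var}(A_n)$ would require the joint fourth moments $\EE[d_n^2(k)d_n^2(\ell)]$ of out-degrees in the random recursive tree, which are unpleasant to handle. The martingale decomposition sidesteps this, needing only the cheap a priori estimate $\EE A_n=O(n)$ as input — and I would be careful to use exactly that a priori bound, rather than the conclusion $A_n/n\to 3$, when estimating $\sum_n n^{-2}\EE[(M_{n+1}-M_n)^2\mid\mathcal{G}_n]$, so as to keep the argument non-circular.
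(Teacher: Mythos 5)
Your proof is correct, but it takes a genuinely different route from the paper's. The paper writes $T_n=\frac1n\sum_i i^2 Y_{n,i}$ in terms of the out-degree profile $Y_{n,i}$ of the random recursive tree, invokes the known almost sure limit $Y_{n,i}/n\to 2^{-i-1}$ for each fixed $i$, and controls the tail $\sum_{i>K} i^2Y_{n,i}/n$ by Markov's inequality together with the first-moment estimate $\sum_k\EE d_n^2(k)=3n+O(\log^2 n)$; this truncation argument yields convergence in probability (indeed, as written the paper's proof only delivers the in-probability statement, though that is all that is actually used downstream), after which Scheff\'e's lemma gives the $L^1$ part. You instead exploit the one-step growth dynamics of the tree directly: the exact recursion $A_{n+1}=A_n+2d_n(V_{n+1})+1$ with conditional drift $3-\tfrac2n$ (coming from $\sum_k d_n(k)=n-1$), the martingale $M_n=A_n-\sum_{j<n}(3-\tfrac2j)$, the increment bound $\EE\bigl[(M_{n+1}-M_n)^2\mid\mathcal G_n\bigr]\le 8A_n/n+8$, and the Chow--Kronecker strong law. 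All the individual steps check out: the a priori bound $\EE A_n=O(n)$ makes $\sum_n n^{-2}\EE[(M_{n+1}-M_n)^2\mid\mathcal G_n]$ almost surely finite without any circularity, and Scheff\'e closes the $L^1$ statement exactly as in the paper. What your approach buys is that it is self-contained (no appeal to the cited profile theorem for $Y_{n,i}$) and that it actually establishes the almost sure convergence asserted in the lemma, which the paper's truncation argument does not quite reach; what the paper's approach buys is that it isolates the combinatorial input ($Y_{n,i}/n\to2^{-i-1}$, hence the constant $3=\sum_i i^2 2^{-i-1}$) as a black box.
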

\begin{proof}
  For $n> i \geq 0$, let 
$$Y_{n,i}:=\#\{d_n(k)=i: 1\leq k \leq n\}$$
be the number of vertices of out-degree $i$ in the growing random recursive tree at time $n$. Observe that
\[
T_n = \frac{1}{n} \sum_{i=1}^n i^2 Y_{n,i}, \quad n\geq 1.
\]
By \cite[Theorem 1.1]{MR2116576}, for any $i\geq 0$, $\lim_{n\to \infty}Y_{n,i}/n =2^{-i-1}$ a.s..  Note that $\sum_{i=1}^{\infty}i^22^{-i-1}=3$. For any $\varepsilon, \eta\in (0,1/2)$, we can choose a positive integer $K$ such that 
$$
\sum_{i=1}^{K}\frac{i^2}{2^{i+1}}\geq 3-\varepsilon \eta.
$$
Then, by the dominated convergence theorem, 
\begin{equation}
\label{domii2Yni}
\sum_{i=1}^K \lim_{n\to \infty} \frac{\EE i^2 Y_{n,i}}{n} =  \sum_{i=1}^{K}\frac{i^2}{2^{i+1}}\geq 3-\varepsilon\eta.
\end{equation}
 Then, 
$$
\begin{aligned}
\PP(|T_n-3|\geq \varepsilon) &\leq \PP(T_n\leq 3-\varepsilon)+\PP(T_n\geq 3+ \varepsilon) \\
&\leq \PP\left(\sum_{i=1}^{K}\frac{i^2Y_{n,i}}{n}\leq 3-\varepsilon\right)+ \PP\left(\sum_{i=1}^{K}\frac{i^2Y_{n,i}}{n}\geq 3\right)+\PP\left(\sum_{i=K+1}^{n}\frac{i^2Y_{n,i}}{n}\geq \varepsilon\right).
\end{aligned}
$$
In the second line, the first two terms converge to $0$ as $n\to \infty$ by the a.s.-convergence of $\sum_{i=1}^Ki^2Y_{n,i}/n \to \sum_{i=1}^Ki^22^{-i-1} \in [3-\varepsilon\eta, 3)$. For the last term, one can use (\ref{sumexped2}), (\ref{domii2Yni}) and Chebyshev's inequality to see that
$$
\lim_{n\to \infty}\PP\left(\sum_{i=K+1}^{n}\frac{i^2Y_{n,i}}{n}\geq \varepsilon\right) \leq  \lim_{n\to \infty}\frac{\EE T_n-\sum_{i=1}^K  \frac{\EE i^2 Y_{n,i}}{n}}{\varepsilon} \leq \eta.
$$
Since $\eta$ is arbitrary, this shows that $T_n \to 3$ in probability. Since $\EE T_n \to 3$, by Scheffé's lemma, see e.g. \cite[Theorem 5.12]{MR4226142}, one has $T_n$ converges to $3$ in $L^1$. 
\end{proof}

We are now ready to prove Proposition \ref{semomsumconv}.

\begin{proof}[Proof of Proposition \ref{semomsumconv}]
 For each biased random walk $\tilde{S}^{(k)}$, it is direct to check that 
\begin{equation}
\label{VartildSn}
\operatorname{Var}((\tilde{S}^{(k)}_n)^2)= 2 n(n-1)\left[1+2(n-2)\alpha_1^2-(2 n-3)\alpha_1^4\right].
\end{equation}
Therefore, conditionally on $\GG_n=\sigma((d_n(k))_{1\leq k \leq n})$, the variance of the sum $\sum_{k=1}^n (\tilde{S}^{(k)}_{d_n(k)})^2$ satisfies
\begin{equation}
  \label{VartildSncondbd}
\sum_{k=1}^n \operatorname{Var}((\tilde{S}^{(k)}_{d_n(k)})^2 \mid \GG_n) \leq C_1(\alpha_1)n+ C_2(\alpha_1) \sum_{k=1}^n d_n^2(k) + C_3(\alpha_1)\sum_{k=1}^n d_n^3(k),
\end{equation}
where $C_1(\alpha_1)$, $C_2(\alpha_1)$ and $C_3(\alpha_1)$ are positive constants depending on $\alpha_1$. 

Recall that $d_n^{*}=\max\{d_n(1),d_n(2),\dots, d_n(n)\}$ and $d_n^{*}/ \log_2(n) \to 1$ in probability. Fix $\varepsilon,\eta\in (0,1/3)$, and define the event
$$E_n(\varepsilon):=\left\{|\sum_{k=1}^n d_n^2(k) - 3n| \leq \varepsilon n \text{ and } d_n^{*} \leq (1+\varepsilon)\log_2 (n)\right\}.$$
 By Lemma \ref{dnkinL1con}, we have $\PP(E_n(\varepsilon))\geq 1-\eta$ for sufficiently large $n$. On the event $E_n(\varepsilon)$, by (\ref{secmomsumS}), one has
$$
\left|\sum_{k=1}^n\EE\left( (\tilde{S}^{(k)}_{d_n(k)})^2 \mid \GG_n\right) -n(1+2\alpha_1^2)\right| \leq  \alpha_1^2|\sum_{k=1}^n d_n^2(k)-3n| + 4p_1(1-p_1) \leq \varepsilon n +4p_1(1-p_1). 
$$
Therefore, for large $n$ such that $\varepsilon n \geq 4p_1(1-p_1)$, one has
$$
\begin{aligned}
&\quad\ \PP\left( \left|\sum_{k=1}^n (\tilde{S}^{(k)}_{d_n(k)})^2 - n(1+2\alpha_1^2)\right| \geq 3\varepsilon n \mid \GG_n\right)\mathds{1}_{E_n(\varepsilon)} \\
 &\leq \PP\left(\left|\sum_{k=1}^n (\tilde{S}^{(k)}_{d_n(k)})^2 - \sum_{k=1}^n\EE\left( (\tilde{S}^{(k)}_{d_n(k)})^2 \mid \GG_n\right)\right| \geq \varepsilon n\mid \GG_n \right)\mathds{1}_{E_n(\varepsilon)} \\
&\leq \frac{1}{\varepsilon^2 n^2}\sum_{k=1}^n \operatorname{Var}((\tilde{S}^{(k)}_{d_n(k)})^2 \mid \GG_n) \mathds{1}_{E_n(\varepsilon)} \leq \frac{C(\alpha_1,\varepsilon)\log n}{n}, 
\end{aligned}
$$
where $C(\alpha_1,\varepsilon)$ is a positive constant depending on $\alpha_1$ and $\varepsilon$, and we used (\ref{VartildSncondbd}) and that $d_n^3(k)\leq d_n^{*}d_n^2(k)$ in the third inequality. Taking the expectation over $\GG_n$, we obtain that 
$$
\begin{aligned}
  &\quad\ \PP\left( \left|\sum_{k=1}^n (\tilde{S}^{(k)}_{d_n(k)})^2 - n(1+2\alpha_1^2)\right| \geq 3\varepsilon n \right)  \\
&\leq \PP(E^c_n(\varepsilon)) + \PP\left( \left|\sum_{k=1}^n (\tilde{S}^{(k)}_{d_n(k)})^2 - n(1+2\alpha_1^2)\right| \geq 3\varepsilon n, E_n(\varepsilon) \right) \\
& \leq \eta+ \frac{C(\alpha_1,\varepsilon)\log n}{n}.
\end{aligned}
$$
By first letting $n\to \infty$ and then $\eta \to 0$, we obtain that $\sum_{k=1}^n (\tilde{S}^{(k)}_{d_n(k)})^2/n$ converges to $1+2\alpha_1^2$ in probability.
\end{proof}

\section{Proofs of Theorem \ref{ThmtwoERW} and Theorem \ref{Thmal2difsin2ndm}}  
\label{secthm2dERW}

Throughout this section, we let $(S_n^{(1)}, S_n^{(2)})_{n\geq 0}$ be a two-elephant walking model with memory parameters $p_1,p_2 \in [0,1]$. Recall that $\alpha_i=2p_i-1$ for $i=1,2$. We assume that the memory parameters are such that $\alpha_1\alpha_2 \neq 0$. 

Let us first introduce some preliminary notations. For $\alpha=(\alpha_1,\alpha_2)$, we define
$$
r_{\alpha}:=\begin{cases}
\sqrt{\frac{\alpha_1}{\alpha_2}}, & \text{if } \alpha_1\alpha_2>0, \\
\mathrm{i}\sqrt{\frac{-\alpha_1}{\alpha_2}}, & \text{if } \alpha_1\alpha_2<0.
\end{cases}
$$
We note that $\lambda_{\alpha}=r_{\alpha} \alpha_2$. When $\lambda_{\alpha} \in \{-1,1\}$ (which occurs if and only if $\alpha_1=\alpha_2=1$ or $\alpha_1=\alpha_2=-1$), we let
$$
\hat{\beta}_n=\frac{2}{n(n-1)}, \quad n\geq 2.
$$
When $\lambda_{\alpha} \notin \{-1,1\}$, for $n\geq 1$, we let 
\begin{equation}
  \label{defgammabeta}
\gamma_n(\pm \lambda_{\alpha}):=\frac{1\pm \lambda_{\alpha}}{n+1}, \quad \text{and}\quad \beta_n(\pm \lambda_{\alpha}):=\prod_{k=1}^{n-1}(1-\gamma_k(\pm \lambda_{\alpha}))=\frac{\Gamma(n\mp \lambda_{\alpha}) }{\Gamma(1\mp \lambda_{\alpha}) \Gamma(n+1)},
\end{equation}
with the convention that $\beta_1(\pm \lambda_{\alpha}):=1$. Notice that 
\begin{equation}
\label{betanasy}
\lim _{n \rightarrow \infty} \beta_n(\pm \lambda_{\alpha}) n^{1\pm \lambda_{\alpha}}=\frac{1}{\Gamma(1\mp \lambda_{\alpha})}.
\end{equation} 
For $n\geq 1$, we define 
\begin{equation}
  \label{defxyn}
x_n:=\frac{S_n^{(1)}-r_{\alpha} S_n^{(2)}}{n}, \quad y_n:=\frac{S_n^{(1)}+r_{\alpha} S_n^{(2)}}{n},
\end{equation}
and in particular, 
\begin{equation}
  \label{Sn1Sn2xny}
S_n^{(1)}=\frac{n}{2}(x_n+y_n), \quad S_n^{(2)}=\frac{n}{2r_{\alpha}}(y_n-x_n).
\end{equation}
We shall prove Theorem \ref{ThmtwoERW} and Theorem \ref{Thmal2difsin2ndm} by studying the asymptotic behaviors of $x_n$ and $y_n$. The following Lemmas \ref{lemmaxyn} and \ref{lemlambda1} show that $x_n$ and $y_n$ can be expressed as weighted sums of martingale differences. Recall that $(\FF_n)_{n\geq 0}$ is the natural filtration of $(S_n^{(1)}, S_n^{(2)})_{n\geq 0}$.

\begin{lemma}
\label{lemmaxyn}
If $\lambda_{\alpha} \neq \pm 1$, then for $n\geq 1$, the two random variables $x_n$ and $y_n$ defined in (\ref{defxyn}) can be expressed as
  \begin{equation}
    \label{xnynformula}
x_n=\beta_n(\lambda_{\alpha})\left(x_1+\sum_{j=1}^{n-1} \frac{\gamma_j(\lambda_{\alpha})}{\beta_{j+1}(\lambda_{\alpha})} \varepsilon_{j+1}^{(x)}\right), \quad y_n=\beta_n(-\lambda_{\alpha})\left(y_1+\sum_{j=1}^{n-1} \frac{\gamma_j(-\lambda_{\alpha})}{\beta_{j+1}(-\lambda_{\alpha})} \varepsilon_{j+1}^{(y)}\right),
  \end{equation}
where $(\varepsilon_{j+1}^{(x)})_{j\geq 1}$ and $(\varepsilon_{j+1}^{(y)})_{j\geq 1}$ are given by (\ref{defvarexy}) below, which are martingale difference sequences with respect to the filtration $(\FF_{j+1})_{j\geq 1}$.
\begin{equation}
    \label{defvarexy}
    \begin{aligned}
\varepsilon_{j+1}^{(x)}:&= \frac{1}{1+\lambda_{\alpha}}\left(X_{j+1}^{(1)}-r_{\alpha} X_{j+1}^{(2)}+\lambda_{\alpha} x_j\right), \quad j\geq 1, \\
    \varepsilon_{j+1}^{(y)}:&= \frac{1}{1-\lambda_{\alpha}}\left(X_{j+1}^{(1)}+r_{\alpha} X_{j+1}^{(2)}-\lambda_{\alpha} y_j\right), \quad j\geq 1.
    \end{aligned}
\end{equation}
\end{lemma}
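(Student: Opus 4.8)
The plan is to reduce the defining dynamics of the two-elephant walk to a one-step affine recursion for the rescaled quantities $x_n$ and $y_n$, and then to solve that recursion by the standard telescoping argument.

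First I would write $S_{n+1}^{(i)}=S_n^{(i)}+X_{n+1}^{(i)}$ for $i=1,2$, so that $(n+1)x_{n+1}=nx_n+X_{n+1}^{(1)}-r_{\alpha}X_{n+1}^{(2)}$ and $(n+1)y_{n+1}=ny_n+X_{n+1}^{(1)}+r_{\alpha}X_{n+1}^{(2)}$. The only algebraic input is that $r_{\alpha}^{2}=\alpha_1/\alpha_2$ in both branches of the definition of $r_{\alpha}$, whence $\alpha_1=r_{\alpha}\lambda_{\alpha}$ and $r_{\alpha}\alpha_2=\lambda_{\alpha}$. Substituting these into (\ref{Xn1cond}) gives
\[
\EE\bigl(X_{n+1}^{(1)}-r_{\alpha}X_{n+1}^{(2)}\mid\FF_n\bigr)=\frac{\alpha_1 S_n^{(2)}-\lambda_{\alpha}S_n^{(1)}}{n}=-\lambda_{\alpha}x_n,
\]
and likewise $\EE\bigl(X_{n+1}^{(1)}+r_{\alpha}X_{n+1}^{(2)}\mid\FF_n\bigr)=\lambda_{\alpha}y_n$. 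Taking conditional expectations in the displayed identities for $(n+1)x_{n+1}$ and $(n+1)y_{n+1}$ then yields
\[
\EE(x_{n+1}\mid\FF_n)=\frac{n-\lambda_{\alpha}}{n+1}\,x_n=(1-\gamma_n(\lambda_{\alpha}))\,x_n,
\]
and symmetrically $\EE(y_{n+1}\mid\FF_n)=\frac{n+\lambda_{\alpha}}{n+1}\,y_n=(1-\gamma_n(-\lambda_{\alpha}))\,y_n$.

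Next I would extract the fluctuation term. Subtracting the conditional means just obtained from $x_{n+1}$ and $y_{n+1}$ and simplifying gives
\[
x_{n+1}-(1-\gamma_n(\lambda_{\alpha}))\,x_n=\frac{X_{n+1}^{(1)}-r_{\alpha}X_{n+1}^{(2)}+\lambda_{\alpha}x_n}{n+1}=\gamma_n(\lambda_{\alpha})\,\varepsilon_{n+1}^{(x)},
\]
with $\varepsilon_{n+1}^{(x)}$ exactly as in (\ref{defvarexy}), and in the same way $y_{n+1}-(1-\gamma_n(-\lambda_{\alpha}))\,y_n=\gamma_n(-\lambda_{\alpha})\,\varepsilon_{n+1}^{(y)}$. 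Since $\varepsilon_{n+1}^{(x)}$ and $\varepsilon_{n+1}^{(y)}$ are $\FF_{n+1}$-measurable and, by the drift computation above, have vanishing conditional expectation given $\FF_n$, they form martingale difference sequences with respect to $(\FF_{j+1})_{j\geq1}$. In particular $x_n$ obeys the affine recursion $x_{n+1}=(1-\gamma_n(\lambda_{\alpha}))\,x_n+\gamma_n(\lambda_{\alpha})\,\varepsilon_{n+1}^{(x)}$ for $n\geq1$, and $y_n$ the analogous recursion $y_{n+1}=(1-\gamma_n(-\lambda_{\alpha}))\,y_n+\gamma_n(-\lambda_{\alpha})\,\varepsilon_{n+1}^{(y)}$.

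Finally I would solve the recursion. Using $\beta_{n+1}(\lambda_{\alpha})=\beta_n(\lambda_{\alpha})(1-\gamma_n(\lambda_{\alpha}))$, dividing the recursion by $\beta_{n+1}(\lambda_{\alpha})$ turns it into $x_{n+1}/\beta_{n+1}(\lambda_{\alpha})=x_n/\beta_n(\lambda_{\alpha})+\bigl(\gamma_n(\lambda_{\alpha})/\beta_{n+1}(\lambda_{\alpha})\bigr)\varepsilon_{n+1}^{(x)}$; telescoping this from $k=1$ to $k=n-1$ and using $\beta_1(\lambda_{\alpha})=1$ produces exactly the claimed formula for $x_n$, and the one for $y_n$ follows identically. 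The hypothesis $\lambda_{\alpha}\neq\pm1$ enters only to guarantee that $1\pm\lambda_{\alpha}\neq0$, so that $\varepsilon_{n+1}^{(x)}$ and $\varepsilon_{n+1}^{(y)}$ are well defined, and that each factor $1-\gamma_k(\pm\lambda_{\alpha})=(k\mp\lambda_{\alpha})/(k+1)$ is nonzero; since $|\lambda_{\alpha}|=\sqrt{|\alpha_1\alpha_2|}\leq1$, the latter holds precisely when $\lambda_{\alpha}\neq\pm1$. I do not expect any genuine obstacle here: the argument is essentially bookkeeping, and the only point requiring care is the identity $\alpha_1=r_{\alpha}\lambda_{\alpha}$ (valid whether $r_{\alpha}$ is real or purely imaginary) together with keeping the $\pm$ sign conventions attached to $x_n$ and $y_n$ consistent throughout.
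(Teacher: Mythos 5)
Your proposal is correct and follows essentially the same route as the paper: both derive the one-step affine recursion $x_{n+1}=(1-\gamma_n(\lambda_{\alpha}))x_n+\gamma_n(\lambda_{\alpha})\varepsilon_{n+1}^{(x)}$ (and its analogue for $y_n$) from the defining dynamics and then solve it by telescoping against $\beta_n(\pm\lambda_{\alpha})$. Your version is, if anything, slightly more explicit than the paper's, since you spell out the conditional-expectation computation $\EE(X_{n+1}^{(1)}-r_{\alpha}X_{n+1}^{(2)}\mid\FF_n)=-\lambda_{\alpha}x_n$ that justifies the martingale-difference property, which the paper leaves implicit in its citation of (\ref{Xn1cond}).
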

\begin{proof}
  By definition, for any $n\geq 1$,
\begin{equation}
\label{recudeltxn}
    \begin{aligned}
    x_{n+1} - x_n&=\frac{ S_n^{(1)}-r_{\alpha} S_n^{(2)}+X_{n+1}^{(1)}-r_{\alpha} X_{n+1}^{(2)}}{n+1}-\frac{1}{n+1}\left(1+\frac{1}{n}\right)( S_n^{(1)}-r_{\alpha} S_n^{(2)})\\
    &=\frac{1}{n+1}\left(-x_n+ X_{n+1}^{(1)}-r_{\alpha} X_{n+1}^{(2)}\right) = \frac{1+\lambda_{\alpha}}{n+1}\left(-x_n+\varepsilon_{n+1}^{(x)}\right),
\end{aligned}
\end{equation}
where we used (\ref{Xn1cond}) in the third equality. One can then deduce the first equality in (\ref{xnynformula}) from (\ref{recudeltxn}) by induction. The expression for $y_n$ can be proved similarly.
\end{proof}

The introduction of the sequences $(x_n)_{n\geq 1}$ and $(y_n)_{n\geq 1}$ is motivated by the following observation: Define $z_n:=(S_n^{(1)}/n,S_n^{(2)}/n)$ for $n\geq 1$ (viewing these as column vectors), then by arguments analogous to those used in (\ref{recudeltxn}), we obtain
  \begin{equation}
    \label{stocapprozn}
z_{n+1}-z_n=\frac{1}{n+1}(Az_n + \varepsilon^{(z)}_{n+1}), \quad n\geq 1,
  \end{equation}
  where $(\varepsilon^{(z)}_{n+1})_{n\geq 1}$ is a martingale difference sequence and $A$ is the $2\times 2$ matrix given by
  $$
   A=\left(\begin{array}{ll}
         -1 & \alpha_2 \\
          \alpha_1 & -1
         \end{array}\right).
$$
Note that if $\alpha_1\alpha_2\neq 0$, then $A$ have two distinct eigenvalues $-1-\lambda_{\alpha}$ and  $-1+\lambda_{\alpha}$ with corresponding eigenvectors $(1,-r_{\alpha})$ and $(1,r_{\alpha})$, respectively. The iterative algorithm (\ref{stocapprozn}) is a Robbins–Monro algorithm, whose asymptotic behavior is closely related to that of its corresponding deterministic dynamical system (see e.g. \cite[Section 4]{MR1767993}). In our context, this deterministic system is given by
$$
\frac{d z(t)}{dt} =A z(t), \quad z(0)\in [0,1]^2. 
$$
Thus, it is not surprising that the two-elephant walking model exhibits distinct behaviors depending on whether $\lambda_{\alpha}$ is real or purely imaginary. By investigating the system along the eigenvector directions, that is, along the sequences $(x_n)_{n\geq 1}$ and $(y_n)_{n\geq 1}$, we are able to apply one-dimensional martingale techniques.

Lemma \ref{lemmaxyn} motivates us to study the following two martingales
\begin{equation}
  \label{defMxy}
M_n^{(x)}:=\sum_{j=1}^{n-1} \frac{\gamma_j(\lambda_{\alpha})}{\beta_{j+1}(\lambda_{\alpha})} \varepsilon_{j+1}^{(x)}, \quad M_n^{(y)}:=\sum_{j=1}^{n-1} \frac{\gamma_j(-\lambda_{\alpha})}{\beta_{j+1}(-\lambda_{\alpha})} \varepsilon_{j+1}^{(y)}, \quad n\geq 1,
\end{equation}
with the convention that $M_1^{(x)}=M_1^{(y)}=0$.

\begin{lemma}
  \label{asymvarxy}
  In the setting of Lemma \ref{lemmaxyn}, almost surely, $\lim_{n\to \infty} x_n=\lim_{n\to \infty} y_n=0$, and consequently, 
$$
\lim_{n\to \infty}\frac{S_n^{(1)}}{n}=\lim_{n\to \infty}\frac{S_n^{(2)}}{n}=0, \quad \text{almost surely}.
$$
Moreover, almost surely,
$$\lim_{j\to \infty} (1+\lambda_{\alpha})^2 \EE((\varepsilon_{j+1}^{(x)})^2 \mid \FF_j)= \lim_{j\to \infty} (1-\lambda_{\alpha})^2 \EE((\varepsilon_{j+1}^{(y)})^2 \mid \FF_j)= 1+\frac{\alpha_1}{\alpha_2},$$
and 
$$\lim_{j\to \infty} (1-\alpha_1\alpha_2)\EE(\varepsilon_{j+1}^{(x)}\varepsilon_{j+1}^{(y)} \mid \FF_j)=1-\frac{\alpha_1}{\alpha_2}.$$ 
\end{lemma}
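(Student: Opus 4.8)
The plan is to turn the scalar recursion \eqref{recudeltxn}, rewritten as $x_{n+1}=(1-\gamma_n(\lambda_\alpha))x_n+\gamma_n(\lambda_\alpha)\varepsilon_{n+1}^{(x)}$, into an almost-supermartingale relation for $Z_n:=|x_n|^2$. First note the trivial a priori bounds $|x_n|\le 1+|r_\alpha|$ and $|y_n|\le 1+|r_\alpha|$ coming from $|S_n^{(i)}|\le n$; they make $\varepsilon_{n+1}^{(x)}$ bounded, hence $\EE(|\varepsilon_{n+1}^{(x)}|^2\mid\FF_n)\le C(\alpha)$. Since $\gamma_n(\lambda_\alpha)$ is deterministic and $\EE(\varepsilon_{n+1}^{(x)}\mid\FF_n)=0$, the cross term in $|x_{n+1}|^2$ vanishes in conditional expectation (the one spot where complex values need a little care: one works with $|\cdot|^2$ in the complex sense and uses $\EE(\overline{\varepsilon_{n+1}^{(x)}}\mid\FF_n)=0$), so
$$\EE(|x_{n+1}|^2\mid\FF_n)=|1-\gamma_n(\lambda_\alpha)|^2|x_n|^2+|\gamma_n(\lambda_\alpha)|^2\,\EE(|\varepsilon_{n+1}^{(x)}|^2\mid\FF_n).$$
A direct computation gives $|1-\gamma_n(\lambda_\alpha)|^2=1-c_n$ with $c_n=\bigl(2(1+\operatorname{Re}\lambda_\alpha)\,n+1-|\lambda_\alpha|^2\bigr)/(n+1)^2$; since $\lambda_\alpha\neq\pm1$ together with $|\lambda_\alpha|\le1$ forces $\operatorname{Re}\lambda_\alpha\in(-1,1)$, one has $c_n>0$ for all large $n$ and $\sum_n c_n=\infty$, whereas $|\gamma_n(\lambda_\alpha)|^2\EE(|\varepsilon_{n+1}^{(x)}|^2\mid\FF_n)=O(n^{-2})$ is summable. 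The Robbins--Siegmund almost-supermartingale convergence theorem applied to $Z_n=|x_n|^2\ge0$ then yields that $Z_n$ converges a.s.\ and $\sum_n c_n Z_n<\infty$ a.s.; as $\sum_n c_n=\infty$, the limit must be $0$, i.e.\ $x_n\to0$ a.s.\ (and in $L^2$). The same argument with $\lambda_\alpha$ replaced by $-\lambda_\alpha$ (using $\operatorname{Re}(-\lambda_\alpha)\in(-1,1)$) gives $y_n\to0$ a.s., and then \eqref{Sn1Sn2xny} gives $S_n^{(1)}/n=(x_n+y_n)/2\to0$ and $S_n^{(2)}/n=(y_n-x_n)/(2r_\alpha)\to0$ a.s. (Alternatively, one may invoke a standard stochastic approximation convergence theorem directly on the two-dimensional recursion \eqref{stocapprozn}, since $A$ has both eigenvalues $-1\pm\lambda_\alpha$ with strictly negative real part.)

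\textbf{Part 2: the conditional second moments.} On the a.s.\ event of Part~1 everything is an explicit computation. Using $\varepsilon_{j+1}^{(x)}=(1+\lambda_\alpha)^{-1}(X_{j+1}^{(1)}-r_\alpha X_{j+1}^{(2)}+\lambda_\alpha x_j)$ I expand
$$(1+\lambda_\alpha)^2\,\EE\bigl((\varepsilon_{j+1}^{(x)})^2\mid\FF_j\bigr)=\EE\bigl((X_{j+1}^{(1)}-r_\alpha X_{j+1}^{(2)}+\lambda_\alpha x_j)^2\mid\FF_j\bigr),$$
and I need the conditional moments $\EE((X_{j+1}^{(i)})^2\mid\FF_j)=1$ (steps are $\pm1$), $\EE(X_{j+1}^{(1)}\mid\FF_j)=\alpha_1 S_j^{(2)}/j$, $\EE(X_{j+1}^{(2)}\mid\FF_j)=\alpha_2 S_j^{(1)}/j$ from \eqref{Xn1cond}, and the key product formula $\EE(X_{j+1}^{(1)}X_{j+1}^{(2)}\mid\FF_j)=\alpha_1\alpha_2 S_j^{(1)}S_j^{(2)}/j^2$, which follows by writing $X_{j+1}^{(1)}=(2\xi_{j+1}^{(1)}-1)X_{u_j^{(2)}}^{(2)}$ and $X_{j+1}^{(2)}=(2\xi_{j+1}^{(2)}-1)X_{u_j^{(1)}}^{(1)}$ and using that $\xi_{j+1}^{(1)},\xi_{j+1}^{(2)},u_j^{(1)},u_j^{(2)}$ are mutually independent and independent of $\FF_j$, so the conditional expectation factorizes. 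Since $x_j\to0$ and $S_j^{(i)}/j\to0$ a.s., every term in the expansion except $1+r_\alpha^2$ vanishes in the limit, giving $(1+\lambda_\alpha)^2\EE((\varepsilon_{j+1}^{(x)})^2\mid\FF_j)\to1+r_\alpha^2=1+\alpha_1/\alpha_2$, where $r_\alpha^2=\alpha_1/\alpha_2$ in both the real and the imaginary case. The computation for $\varepsilon_{j+1}^{(y)}$ is identical (the signs of $r_\alpha X_{j+1}^{(2)}$ and of $\lambda_\alpha y_j$ change but $r_\alpha^2$ does not), so the limit is again $1+\alpha_1/\alpha_2$. For the cross term, expanding
$$(1-\lambda_\alpha^2)\,\EE\bigl(\varepsilon_{j+1}^{(x)}\varepsilon_{j+1}^{(y)}\mid\FF_j\bigr)=\EE\bigl((X_{j+1}^{(1)}-r_\alpha X_{j+1}^{(2)}+\lambda_\alpha x_j)(X_{j+1}^{(1)}+r_\alpha X_{j+1}^{(2)}-\lambda_\alpha y_j)\mid\FF_j\bigr),$$
the two terms $\pm r_\alpha X_{j+1}^{(1)}X_{j+1}^{(2)}$ cancel and, again by Part~1, the limit is $1-r_\alpha^2=1-\alpha_1/\alpha_2$; finally $1-\lambda_\alpha^2=1-\alpha_1\alpha_2$ since $\lambda_\alpha^2=r_\alpha^2\alpha_2^2=\alpha_1\alpha_2$, which gives the claimed form.

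\textbf{Where the work is.} Part~2 is routine bookkeeping once the factorization formula for $\EE(X_{j+1}^{(1)}X_{j+1}^{(2)}\mid\FF_j)$ is in hand and the identities $r_\alpha^2=\alpha_1/\alpha_2$ and $\lambda_\alpha^2=\alpha_1\alpha_2$ are recorded. The genuine step is the almost sure convergence in Part~1: it requires a real tool (the almost-supermartingale / stochastic approximation convergence theorem), and the one point to verify with care is that the drift coefficient $c_n$ in the $|x_n|^2$-recursion is eventually positive and non-summable, which is precisely the condition $\operatorname{Re}\lambda_\alpha\in(-1,1)$ guaranteed by $\lambda_\alpha\neq\pm1$. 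Beyond that, the only persistent (if minor) source of slips is keeping track of complex-valued quantities — working with $|\cdot|^2$ in Part~1 but with genuine squares $(\cdot)^2$ in Part~2.
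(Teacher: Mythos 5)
Your proof is correct, and the second half (the conditional moment computations) coincides with the paper's: both reduce to (\ref{Xn1cond}), the conditional factorization $\EE(X_{j+1}^{(1)}X_{j+1}^{(2)}\mid\FF_j)=\alpha_1\alpha_2 S_j^{(1)}S_j^{(2)}/j^2$, and the identities $r_\alpha^2=\alpha_1/\alpha_2$, $\lambda_\alpha^2=\alpha_1\alpha_2$, after which everything non-constant vanishes by the first part. Where you genuinely diverge is in proving $x_n\to 0$ and $y_n\to 0$. The paper uses the closed-form representation $x_n=\beta_n(\lambda_\alpha)(x_1+M_n^{(x)})$ from Lemma \ref{lemmaxyn}, bounds the bracket $\langle M^{(x)}\rangle_n\leq C(\alpha)\sum_j j^{2\lambda_\alpha}$ via (\ref{betanasy}), and invokes the strong law of large numbers for martingales, treating real and imaginary parts separately when $\lambda_\alpha$ is purely imaginary. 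You instead run a Robbins--Siegmund almost-supermartingale argument directly on $Z_n=|x_n|^2$, using the one-step recursion $x_{n+1}=(1-\gamma_n(\lambda_\alpha))x_n+\gamma_n(\lambda_\alpha)\varepsilon_{n+1}^{(x)}$; your verification that the drift coefficient $c_n=(2(1+\operatorname{Re}\lambda_\alpha)n+1-|\lambda_\alpha|^2)/(n+1)^2$ is positive and non-summable exactly when $\lambda_\alpha\neq\pm1$ is the right hinge, and the cross term does vanish since $\EE(\overline{\varepsilon_{n+1}^{(x)}}\mid\FF_n)=0$. Your route is more self-contained for this lemma — it needs neither the $\Gamma$-function asymptotics (\ref{betanasy}) nor a real/imaginary split, since $|x_n|^2$ is real in both regimes — whereas the paper's route front-loads the representation $x_n=\beta_n(\lambda_\alpha)M_n^{(x)}+\beta_n(\lambda_\alpha)x_1$, which is then reused throughout Sections \ref{secthm2dERW} and \ref{secthmfluc} for the LIL-type bounds, the CLTs, and the superdiffusive limit, so the apparent extra machinery is amortized. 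Either argument establishes the lemma in full.
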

\begin{proof}
Observe that $(\varepsilon_{j+1}^{(x)})_{j\geq 1}$ and $(\varepsilon_{j+1}^{(y)})_{j\geq 1}$ defined in (\ref{defvarexy}) are uniformly bounded. Thus, when $\lambda_{\alpha}$ is real (that is, $\alpha_1\alpha_2>0$), the quadratic variations of
 the martingales $M_n^{(x)}$ and $M_n^{(y)}$ satisfy 
\begin{equation}
  \label{quadvarMxyasym}
\begin{aligned}
  \langle M^{(x)} \rangle_n &= \sum_{j=1}^{n-1} \left(\frac{\gamma_j(\lambda_{\alpha})}{\beta_{j+1}(\lambda_{\alpha})}\right)^2 \EE((\varepsilon_{j+1}^{(x)})^2 \mid \FF_j) \leq C(\alpha) \sum_{j=1}^{n-1} j^{2\lambda_{\alpha}}, \\
 \langle M^{(y)} \rangle_n &= \sum_{j=1}^{n-1} \left(\frac{\gamma_j(-\lambda_{\alpha})}{\beta_{j+1}(-\lambda_{\alpha})}\right)^2 \EE((\varepsilon_{j+1}^{(y)})^2 \mid \FF_j) \leq C(\alpha) \sum_{j=1}^{n-1} j^{-2\lambda_{\alpha}},
\end{aligned}
\end{equation}
where $C(\alpha)$ is a positive constant, and we used (\ref{betanasy}) in the last step. By the law of large numbers for martingales, see e.g. \cite[Theorem 1.3.15]{MR1485774}, we have, almost surely,
$$
\lim_{n\to \infty}|x_n|=\lim_{n\to \infty}|\beta_n(\lambda_{\alpha})M^{(x)}_n|\leq \lim_{n\to \infty} n^{-1-\lambda_{\alpha}} \langle M^{(x)} \rangle_n^{2/3}=0,
$$
and similarly, $\lim_{n\to \infty}|y_n|=0$ almost surely. When $\lambda_{\alpha}$ is purely imaginary (that is, $\alpha_1\alpha_2<0$), one can consider the real and imaginary parts of the martingales $M_n^{(x)}$ and $M_n^{(y)}$ separately. One can then use the same argument to show that $\lim_{n\to \infty}|x_n|=\lim_{n\to \infty}|y_n|=0$ almost surely. This proves the first desired result.

By (\ref{defvarexy}), we have, for $j\geq 1$, 
\begin{equation}
\label{condvarxnyn}
\begin{aligned}
  \EE((\varepsilon_{j+1}^{(x)})^2 \mid \FF_j)&=\frac{1}{(1+\lambda_{\alpha})^2}\left(1+\frac{\alpha_1}{\alpha_2}-\alpha_1\alpha_2 x_j^2-\frac{2 r_{\alpha}\alpha_1\alpha_2 S_j^{(1)}S_j^{(2)} }{j^2}\right), \\
\EE((\varepsilon_{j+1}^{(y)})^2 \mid \FF_j)&=\frac{1}{(1-\lambda_{\alpha})^2}\left(1+\frac{\alpha_1}{\alpha_2}-\alpha_1\alpha_2 y_j^2+\frac{2 r_{\alpha}\alpha_1\alpha_2 S_j^{(1)}S_j^{(2)} }{j^2}\right), \\
\EE(\varepsilon_{j+1}^{(x)}\varepsilon_{j+1}^{(y)} \mid \FF_j)&=\frac{1}{1-\alpha_1\alpha_2}\left(1-\frac{\alpha_1}{\alpha_2}+\alpha_1\alpha_2 x_j y_j\right),
\end{aligned}
\end{equation}
where we used (\ref{Xn1cond}) and that $X_{j+1}^{(1)}$ and $X_{j+1}^{(2)}$ are independent conditionally on $\FF_j$. The desired limits then follow from the almost sure convergence of $x_j$ and $y_j$ to $0$ as $j\to \infty$.
\end{proof}

The proof of the following Lemma \ref{lemlambda1} is similar to that of Lemma \ref{lemmaxyn} and we omit it here. 
\begin{lemma}
  \label{lemlambda1}
  (i) If $\alpha_1=\alpha_2=-1$, then $(x_n)_{n\geq 1}$ is a bounded martingale and thus, converges a.s.. Moreover, for $n\geq 2$,
  $$
y_n=\hat{\beta}_n\left(y_2+\sum_{j=2}^{n-1} j \varepsilon_{j+1}^{(y)}\right), \quad n\geq 2.
$$
where $(\varepsilon_{j+1}^{(y)})_{j\geq 1}$ is defined in (\ref{defvarexy}). \\
(ii) If $\alpha_1=\alpha_2=1$, then $(y_n)_{n\geq 1}$ is a bounded martingale and thus, converges a.s.. Moreover, for $n\geq 2$,
$$
x_n=\hat{\beta}_n\left(x_2+\sum_{j=2}^{n-1} j \varepsilon_{j+1}^{(x)}\right), \quad n\geq 2,
$$
where $(\varepsilon_{j+1}^{(x)})_{j\geq 1}$ is defined in (\ref{defvarexy}).
\end{lemma}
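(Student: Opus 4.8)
The plan is to follow the same route as the proof of Lemma \ref{lemmaxyn}, the only new feature being that when $\alpha_1=\alpha_2=-1$ or $\alpha_1=\alpha_2=1$ one of the two eigendirections of $A$ degenerates (the corresponding eigenvalue of $A$ equals $0$), so the formulas (\ref{defvarexy}) and (\ref{xnynformula}) break down and must be replaced by a direct computation. I will describe case (i); case (ii) follows by interchanging $\alpha_1$ and $\alpha_2$, hence $x_n$ and $y_n$.

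First I would record that when $\alpha_1=\alpha_2=-1$ one has $r_{\alpha}=1$ and $\lambda_{\alpha}=-1$, so that $x_n=(S_n^{(1)}-S_n^{(2)})/n$ and $y_n=(S_n^{(1)}+S_n^{(2)})/n$. Exactly as in the arithmetic identity leading to (\ref{recudeltxn}), for every $n\geq1$,
$$x_{n+1}-x_n=\frac{1}{n+1}\bigl(-x_n+X_{n+1}^{(1)}-X_{n+1}^{(2)}\bigr),\qquad y_{n+1}-y_n=\frac{1}{n+1}\bigl(-y_n+X_{n+1}^{(1)}+X_{n+1}^{(2)}\bigr).$$
From (\ref{Xn1cond}) with $\alpha_1=\alpha_2=-1$ one gets $\EE(X_{n+1}^{(1)}-X_{n+1}^{(2)}\mid\FF_n)=-S_n^{(2)}/n+S_n^{(1)}/n=x_n$, hence $\EE(x_{n+1}\mid\FF_n)=x_n$. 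Since $x_n$ is $\FF_n$-measurable and $|x_n|\leq|S_n^{(1)}|/n+|S_n^{(2)}|/n\leq2$, the process $(x_n)_{n\geq1}$ is a bounded martingale and therefore converges almost surely by the martingale convergence theorem.

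For $y_n$ I would substitute $X_{n+1}^{(1)}+X_{n+1}^{(2)}=2\varepsilon_{n+1}^{(y)}-y_n$, which is exactly the definition (\ref{defvarexy}) of $\varepsilon_{n+1}^{(y)}$ specialised to $\lambda_{\alpha}=-1$ and $r_{\alpha}=1$; note that $\varepsilon_{n+1}^{(y)}$ is still well defined here since $1-\lambda_{\alpha}=2\neq0$, and $\EE(\varepsilon_{n+1}^{(y)}\mid\FF_n)=0$ again by (\ref{Xn1cond}), so $(\varepsilon_{j+1}^{(y)})_{j\geq1}$ is a martingale difference sequence for $(\FF_{j+1})_{j\geq1}$. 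The recursion for $y_n$ then reads
$$y_{n+1}=\frac{n-1}{n+1}\,y_n+\frac{2}{n+1}\,\varepsilon_{n+1}^{(y)},\qquad n\geq2.$$
Since $\prod_{k=2}^{n-1}\frac{k-1}{k+1}$ telescopes to $\frac{2}{n(n-1)}=\hat{\beta}_n$ and $\frac{n-1}{n+1}\hat{\beta}_n=\hat{\beta}_{n+1}$, the substitution $y_n=\hat{\beta}_n u_n$ turns the recursion into $u_{n+1}=u_n+n\,\varepsilon_{n+1}^{(y)}$; iterating from $u_2=y_2/\hat{\beta}_2=y_2$ gives $y_n=\hat{\beta}_n\bigl(y_2+\sum_{j=2}^{n-1}j\,\varepsilon_{j+1}^{(y)}\bigr)$ for $n\geq2$ (this identity can also be verified directly by induction on $n$). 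Case (ii) is the mirror image: for $\alpha_1=\alpha_2=1$ one has $\lambda_{\alpha}=1$, the process $(y_n)_{n\geq1}$ is a bounded martingale by the same argument, and $(x_n)$ satisfies $x_{n+1}=\frac{n-1}{n+1}x_n+\frac{2}{n+1}\varepsilon_{n+1}^{(x)}$, which yields the stated closed form for $x_n$. None of these steps is genuinely hard; the only point requiring attention is that (\ref{defvarexy}) and (\ref{xnynformula}) cease to apply in this borderline case---one of the denominators $1\pm\lambda_{\alpha}$ vanishes and the corresponding $\beta_n(\pm\lambda_{\alpha})$ degenerates---so the martingale property and the telescoping identity for $\hat{\beta}_n$ must be established by hand.
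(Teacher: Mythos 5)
Your proposal is correct and follows exactly the route the paper intends: the paper omits the proof of Lemma \ref{lemlambda1}, stating only that it is ``similar to that of Lemma \ref{lemmaxyn}'', and your argument is precisely that adaptation --- the recursion from (\ref{recudeltxn}) specialised to $\lambda_{\alpha}=\mp1$, the observation that the degenerate eigendirection (eigenvalue $0$ of $A$) yields a bounded martingale, and the telescoping with $\hat{\beta}_n$ in the other direction. All the computations (the martingale property of $x_n$ via (\ref{Xn1cond}), the identity $\hat{\beta}_{n+1}/\hat{\beta}_n=(n-1)/(n+1)$, and the resulting closed form) check out.
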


\begin{proof}[Proof of Theorem \ref{ThmtwoERW}]
(i). Assume that $\lambda_{\alpha} \in (-1/2,0)\cup (0,1/2)$. Using (\ref{betanasy}), we see that there exists a positive constant $C(\alpha)$ such that for any $j\geq 2$,
  $$
\left|\frac{\gamma_{j-1}(\lambda_{\alpha})}{\beta_{j}(\lambda_{\alpha})} \varepsilon_{j}^{(x)}\right|  \leq C(\alpha) j^{\lambda_{\alpha}}, \quad \left|\frac{\gamma_{j-1}(-\lambda_{\alpha})}{\beta_{j}(-\lambda_{\alpha})} \varepsilon_{j}^{(y)}\right| \leq C(\alpha) j^{-\lambda_{\alpha}}.
  $$
By Lemma \ref{asymvarxy}, both $\langle M^{(x)} \rangle_n n^{-1-2\lambda_{\alpha}}$ and $\langle M^{(y)} \rangle_n n^{-1+2\lambda_{\alpha}}$ converge to positive constants almost surely as $n\to \infty$. For $j\geq 2$, we let
  $$
K_{j}:= C(\alpha) \max\left\{\frac{ j^{\lambda_{\alpha}} \sqrt{2 \log\log \sqrt{\langle M^{(x)} \rangle_j}}}{\sqrt{\langle M^{(x)} \rangle_j}}, \frac{ j^{-\lambda_{\alpha}} \sqrt{2 \log\log \sqrt{\langle M^{(y)} \rangle_j}}}{\sqrt{\langle M^{(y)} \rangle_j}}\right\},
  $$
  which, is $\FF_{j-1}$-measurable, and converges to $0$ almost surely as $j\to \infty$. By a result of Stout \cite[Theorem 1.3]{MR353428} on the law of the iterated logarithm for martingales, we have, almost surely,
  $$
\limsup_{n\to \infty} \frac{|M_n^{(x)}|}{\sqrt{2 \langle M^{(x)} \rangle_n \log \log \langle M^{(x)} \rangle_n}} \leq 1, \quad \limsup_{n\to \infty} \frac{|M_n^{(y)}|}{\sqrt{2\langle M^{(y)} \rangle_n \log \log \langle M^{(y)} \rangle_n}}\leq 1.
  $$
  Together with (\ref{Sn1Sn2xny}) and Lemma \ref{lemmaxyn}, this implies that, almost surely,
  \begin{equation}
    \label{Sn1LILproofequ}
 \limsup_{n\to \infty} \frac{|S_n^{(1)}|}{\sqrt{n \log \log n}} \leq \limsup_{n\to \infty} \frac{n \beta_n(\lambda_{\alpha})|M_n^{(x)}|+ n \beta_n(-\lambda_{\alpha})|M_n^{(y)}|}{2\sqrt{n \log \log n}} \leq C(\alpha),
  \end{equation}
  where $C(\alpha)$ is a positive constant depending on $\alpha_1$ and $\alpha_2$. This proves (\ref{Sn1LIL}).
  
  For $k=1,2,\dots,n$, we let $\FF_{n,k}:=\FF_{k+1}$ and
$$
X_{n,k}:=\frac{\sqrt{n+1}}{2}\left(\beta_{n+1}(\alpha) \frac{\gamma_k(\alpha)}{\beta_{k+1}(\alpha)} \varepsilon_{k+1}^{(x)}+\beta_{n+1}(-\alpha)\frac{\gamma_k(-\alpha)}{\beta_{k+1}(-\alpha)} \varepsilon_{k+1}^{(y)}\right).
$$
Using Lemma \ref{asymvarxy}, one has, almost surely,
\begin{equation}
  \label{less12asymvarXnk}
\begin{aligned}
  \lim_{n\to \infty}\sum_{k=1}^n \EE\left(X_{n, k}^2 \mid \mathcal{F}_{n, k-1}\right)&= \frac{1}{2}\left(1-\frac{\alpha_1}{\alpha_2}\right) + \frac{1}{4} \left(1+\frac{\alpha_1}{\alpha_2}\right)\left(\frac{1}{1-2\lambda_{\alpha}}+\frac{1}{1+2\lambda_{\alpha}}\right)\\
  &= \frac{(1+2\alpha_1^2-2\alpha_1\alpha_2)}{1-4  \alpha_1\alpha_2}.
\end{aligned}
\end{equation}
Moreover, there exists a positive constant $C(\alpha)$ such that for any $n\geq 1$ and $1\leq k \leq n$, one has $|X_{n,k}| \leq C(\alpha) n^{-\frac{1}{2}+|\lambda_{\alpha}|}$,
and thus, for any $\varepsilon>0$, almost surely, as $n\to \infty$,
$$\sum_{k=1}^n \EE\left(X_{n,k}^2 \mathds{1}_{\{|X_{n,k}|>\varepsilon\}} \mid \mathcal{F}_{n,k-1}\right) \leq \frac{C(\alpha) n^{-\frac{1}{2}+|\lambda_{\alpha}|}}{\varepsilon} \sum_{k=1}^n \EE\left(X_{n,k}^2 \mid \mathcal{F}_{n,k-1}\right) \to 0,$$
where we used (\ref{less12asymvarXnk}) in the last step. The asymptotic normality in (\ref{2ERWCLTaless12}) then follows from Proposition \ref{cltmartarray}. 

(ii). Assume that $\lambda_{\alpha} =1/2$ (the case $\lambda_{\alpha}=-1/2$ can be proved similarly). The proof for the LIL-type estimate is similar to that in (i) and we omit it here (notice that in this case, one has $\langle M^{(x)} \rangle_n \sim C_1(\alpha) n^{2}$ and $\langle M^{(y)} \rangle_n \sim C_2(\alpha) \log n$ as $n\to \infty$ where $C_1(\alpha)$ and $C_2(\alpha)$ are positive constants). 

For $k=1,2,\dots,n$, we let $\FF_{n,k}:=\FF_{k+1}$ and, by a slightly abuse of notation, let 
$$
X_{n,k}:=\frac{\sqrt{n+1}}{2\sqrt{\log (n+1)}}\left(\beta_{n+1}(\alpha) \frac{\gamma_k(\alpha)}{\beta_{k+1}(\alpha)} \varepsilon_{k+1}^{(x)}+\beta_{n+1}(-\alpha)\frac{\gamma_k(-\alpha)}{\beta_{k+1}(-\alpha)} \varepsilon_{k+1}^{(y)}\right).
$$
 Using Lemma \ref{asymvarxy}, one has, almost surely,
$$
\lim_{n\to \infty}\sum_{k=1}^n \EE\left(X_{n, k}^2 \mid \mathcal{F}_{n, k-1}\right)=\frac{\alpha_1+\alpha_2}{4\alpha_2}.
$$
Moreover, there exists a positive constant $C(\alpha)$ such that for any $n\geq 1$ and $1\leq k \leq n$, one has $|X_{n,k}| \leq \frac{C(\alpha)}{\sqrt{\log (n+1)}}$,
and thus, for any $\varepsilon>0$, almost surely, as $n\to \infty$,
$$\sum_{k=1}^n \EE\left(X_{n,k}^2 \mathds{1}_{\{|X_{n,k}|>\varepsilon\}} \mid \mathcal{F}_{n,k-1}\right) \leq \frac{C(\alpha)}{\varepsilon \sqrt{\log (n+1)}} \sum_{k=1}^n \EE\left(X_{n,k}^2 \mid \mathcal{F}_{n,k-1}\right) \to 0.$$
Again, we obtain the the asymptotic normality in (\ref{2ERWCLT12}) from Proposition \ref{cltmartarray}.

(iii). We first assume that $|\lambda_{\alpha}|\in (1/2,1)$. We only prove the case $\lambda_{\alpha}\in (1/2,1)$. The case $\lambda_{\alpha}\in (-1,-1/2)$ can be proved similarly. In view of (\ref{quadvarMxyasym}), we see that $(M^{(y)}_n)_{n\geq 1}$ is a $L^2$-bounded martingale and thus, converges almost surely to a finite limit $M_{\infty}^{(y)}$ as $n\to \infty$. Moreover, there exists a positive constant $C(\alpha)$ such that $\langle M^{(x)} \rangle_n \leq C(\alpha)n^{1+2\lambda_{\alpha}}$ for all $n\geq 1$. Choose a positive constant $\varepsilon<(\lambda_{\alpha}-1/2)/3$, by applying \cite[Theorem 1.3.15]{MR1485774} again, we have, almost surely, 
\begin{equation}
  \label{Mnxasymlln}
\lim_{n\to \infty} \frac{|M_n^{(x)}|}{n^{2\lambda_{\alpha}}}\leq \lim_{n\to \infty} \frac{\langle M^{(x)} \rangle_n^{\frac{1}{2}+\varepsilon}}{n^{2\lambda_{\alpha}}} \leq \lim_{n\to \infty} C(\alpha) n^{\frac{1}{2}-\lambda_{\alpha}+3\varepsilon }=0,
\end{equation}
which, together with (\ref{betanasy}) and (\ref{Sn1Sn2xny}), implies that, almost surely,
$$
\lim_{n\to \infty} \frac{S_n^{(1)}}{n^{\lambda_{\alpha}}}=\lim_{n\to \infty}\frac{n^{1-\lambda_{\alpha}}}{2} \left(\beta_n(\lambda_{\alpha}) (x_1+ M_n^{(x)})+\beta_n(-\lambda_{\alpha}) (y_1+M_n^{(y)})\right) = \frac{y_1+M_{\infty}^{(y)}}{2\Gamma(1+\lambda_{\alpha})} , 
$$
and similarly,
$$\lim_{n\to \infty} \frac{S_n^{(2)}}{n^{\lambda_{\alpha}}}=\lim_{n\to \infty}\frac{n^{1-\lambda_{\alpha}}}{2r_{\alpha}} \left(\beta_n(-\lambda_{\alpha}) (y_1+ M_n^{(y)})-\beta_n(\lambda_{\alpha}) (x_1+M_n^{(x)})\right) = \frac{y_1+M_{\infty}^{(y)}}{2r_{\alpha}\Gamma(1+\lambda_{\alpha})}.$$
We set $W:=(y_1+M_{\infty}^{(y)})/(2\Gamma(1+\lambda_{\alpha}))$. By definition, $\EE (\varepsilon_{j+1}^{(y)})^2>0$ for all $j\geq 1$. Thus, 
$$
\operatorname{Var}(M_{\infty}^{(y)})=\sum_{j=1}^{\infty} \left(\frac{\gamma_j(-\lambda_{\alpha})}{\beta_{j+1}(-\lambda_{\alpha})}\right)^2 \EE((\varepsilon_{j+1}^{(y)})^2)>0,
$$
which shows that $W$ is not degenerate. 

 We now assume that $\lambda_{\alpha}\in \{-1, 1\}$. To be more instructive, we prove the case $\lambda_{\alpha}=-1$ (that is, $\alpha_1=\alpha_2=-1$). The case $\lambda_{\alpha}=1$ can be proved similarly. As in (\ref{Mnxasymlln}), we can apply Lemma \ref{lemlambda1} (i) and the law of large numbers for martingales to prove that $y_n\to 0$ almost surely. Thus, almost surely,
$$
\lim_{n\to \infty}\frac{S_n^{(1)}}{n}=-\lim_{n\to \infty}\frac{S_n^{(2)}}{n}=\frac{1}{2}\lim_{n\to \infty} x_n.
$$
Since $(x_n)_{n\geq 1}$ is a bounded martingale, one has
$
\operatorname{Var}(\lim_{n\to \infty} x_n)=\sum_{j=2}^{\infty} \EE(x_{j+1}-x_j)^2>0$ (note that $x_2-x_1=0$), and in particular, $\lim_{n\to \infty} x_n$ is not degenerate. This completes the proof.
\end{proof}

To prove Theorem \ref{Thmal2difsin2ndm}, we first prove an auxiliary lemma.

\begin{lemma}
  \label{lemsumGammanjsqr}
Assume that $\alpha_1\alpha_2<0$. Recall that $\lambda_{\alpha} =\mathrm{i}\sqrt{-\alpha_1\alpha_2}$. Then, as $n\to \infty$,
$$
\sum_{j=2}^n\frac{\Gamma^2(j)}{\Gamma^2(j\pm\lambda_{\alpha})} = \frac{\Gamma^2(n+1)}{(1\mp 2\lambda_{\alpha})n\Gamma^2(n\pm \lambda_{\alpha})} + O(\log n).
$$
\end{lemma}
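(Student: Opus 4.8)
The plan is to treat $c_j := \Gamma^2(j)/\Gamma^2(j\pm\lambda_\alpha)$ as a sequence with a known asymptotic ($c_j \sim j^{\mp 2\lambda_\alpha}$, but since $\lambda_\alpha$ is purely imaginary this is $j^{\mp 2\mathrm{i}|\lambda_\alpha|}$, which has modulus $1$) and to obtain the partial sum via a telescoping identity rather than brute-force estimation. The key observation is that the ratio $c_{j+1}/c_j = \Gamma^2(j+1)\Gamma^2(j\pm\lambda_\alpha)/(\Gamma^2(j)\Gamma^2(j+1\pm\lambda_\alpha)) = j^2/(j\pm\lambda_\alpha)^2$, so that $(j\pm\lambda_\alpha)^2 c_{j+1} = j^2 c_j$. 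Expanding $(j\pm\lambda_\alpha)^2 = j^2 \pm 2\lambda_\alpha j + \lambda_\alpha^2$ gives the recursion $j^2(c_{j+1}-c_j) = -(\pm 2\lambda_\alpha j + \lambda_\alpha^2)c_{j+1}$, i.e.
$$
j^2 c_{j+1} - j^2 c_j = \mp 2\lambda_\alpha\, j\, c_{j+1} - \lambda_\alpha^2 c_{j+1}.
$$

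First I would massage this into a discrete "summation by parts" suitable for telescoping. Write $j^2 c_{j+1} = (j+1)^2 c_{j+1} - (2j+1)c_{j+1}$, so the left-hand side becomes $(j+1)^2 c_{j+1} - j^2 c_j - (2j+1)c_{j+1}$. Summing over $j$ from $2$ to $n$, the first two terms telescope to $(n+1)^2 c_{n+1} - 4c_2$. This yields
$$
(n+1)^2 c_{n+1} - 4c_2 - \sum_{j=2}^n (2j+1)c_{j+1} = \mp 2\lambda_\alpha \sum_{j=2}^n j\, c_{j+1} - \lambda_\alpha^2 \sum_{j=2}^n c_{j+1}.
$$
Now $\sum_{j=2}^n (2j+1)c_{j+1} = 2\sum_{j=2}^n j c_{j+1} + \sum_{j=2}^n c_{j+1}$ and $\sum_{j=2}^n j c_{j+1} = \sum_{j=2}^n (j+1)c_{j+1} - \sum_{j=2}^n c_{j+1} = \sum_{k=3}^{n+1} k c_k - \sum_{j=2}^n c_{j+1}$. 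Substituting and collecting, the target sum $\sum_{j=2}^n c_j = \sum_{j=2}^{n} \Gamma^2(j)/\Gamma^2(j\pm\lambda_\alpha)$ should be expressible, up to re-indexing boundary terms and an error of size $O(1) + |\lambda_\alpha^2|\sum |c_{j+1}|$, in terms of $(n+1)^2 c_{n+1}/((\mp 2\lambda_\alpha) n)$ or similar. Since $|c_j| = |\Gamma(j)/\Gamma(j\pm\lambda_\alpha)|^2 \le C$ is bounded uniformly in $j$ (again because $\lambda_\alpha$ is imaginary, so $|\Gamma(j\pm\lambda_\alpha)/\Gamma(j)| \to 1$), the correction terms $\sum_{j=2}^n c_{j+1}$ and the constant terms contribute $O(n)$ at worst — which is too crude. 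So I need to be more careful: the point of the identity is that the oscillatory factors $c_j$ enjoy cancellation in the sum $\sum c_{j+1}$ itself, and one must bound $\sum_{j=2}^n c_{j+1}$ by $O(\log n)$ by iterating the same telescoping trick at one lower order (the sum of a bounded oscillating sequence $c_j \approx e^{\mp 2\mathrm{i}|\lambda_\alpha|\log j}$ against the measure $dj$ gives a contribution of size $O(1)$ per dyadic block times $\log n$ blocks — in fact Abel summation against $j \mapsto \Gamma^2(j)/\Gamma^2(j\pm\lambda_\alpha)$ whose "increments" are $O(1/j)$ gives exactly the $O(\log n)$).

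I expect the main obstacle to be bookkeeping the boundary terms and confirming that the leading term is precisely $\Gamma^2(n+1)/((1\mp 2\lambda_\alpha)n\Gamma^2(n\pm\lambda_\alpha))$ with the right constant $1\mp 2\lambda_\alpha$ in the denominator; the factor $n$ (rather than $n+1$ or $n-1$) and the sign convention must be tracked through the reindexing. A cleaner route, which I would actually write up: verify directly by the recursion $(j\pm\lambda_\alpha)^2 c_{j+1}=j^2 c_j$ that the sequence $a_j := \frac{\Gamma^2(j+1)}{(1\mp2\lambda_\alpha)j\,\Gamma^2(j\pm\lambda_\alpha)}$ satisfies $a_{j} - a_{j-1} = c_j + O(1/j)$ — i.e. $a_j$ is an "antiderivative" of $c_j$ up to an $\ell^1$-summable-in-log error — so that $\sum_{j=2}^n c_j = a_n - a_1 + O(\log n)$, and then $a_1 = O(1)$ and $a_n$ is exactly the claimed main term. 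Checking $a_j - a_{j-1} = c_j + O(1/j)$ reduces, after clearing the common factor $\Gamma^2(j)/\Gamma^2(j\pm\lambda_\alpha)$, to the scalar identity
$$
\frac{j^2}{(1\mp2\lambda_\alpha)j} - \frac{(j-1\pm\lambda_\alpha)^2}{(1\mp2\lambda_\alpha)(j-1)} = 1 + O(1/j),
$$
where I used $\Gamma(j+1)^2/\Gamma(j)^2 = j^2$ and $\Gamma(j-1\pm\lambda_\alpha)^2/\Gamma(j\pm\lambda_\alpha)^2 = 1/(j-1\pm\lambda_\alpha)^2$; expanding the left side to leading order in $1/j$ should give $1 + O(1/j)$ after the $\mp2\lambda_\alpha$ terms cancel against the $(1\mp2\lambda_\alpha)^{-1}$ normalization, which is the computation pinning down the constant. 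Summing the error $O(1/j)$ from $2$ to $n$ gives the $O(\log n)$ in the statement, completing the proof.
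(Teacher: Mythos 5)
Your ``cleaner route'' is exactly the paper's proof: the paper verifies, via $\Gamma(j+1)=j\Gamma(j)$ and $\Gamma(j\pm\lambda_\alpha)=(j-1\pm\lambda_\alpha)\Gamma(j-1\pm\lambda_\alpha)$, that $\frac{\Gamma^2(j+1)}{j\Gamma^2(j\pm\lambda_\alpha)}-\frac{\Gamma^2(j)}{(j-1)\Gamma^2(j-1\pm\lambda_\alpha)}=\frac{\Gamma^2(j)}{\Gamma^2(j\pm\lambda_\alpha)}\bigl(1\mp2\lambda_\alpha-\frac{\alpha_1\alpha_2}{j-1}\bigr)$, then divides by $1\mp2\lambda_\alpha$, uses the uniform boundedness of $\Gamma(j)/\Gamma(j\pm\lambda_\alpha)$ (as $\lambda_\alpha$ is purely imaginary) to absorb the last term into $O(1/j)$, and telescopes --- precisely your antiderivative $a_j$ argument with the same identification of the constant. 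The proposal is correct; the preliminary summation-by-parts discussion is an unnecessary detour, but the argument you say you would actually write up matches the paper's.
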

\begin{proof}
  Notice that for $j\geq 2$,
$$
\begin{aligned}
\frac{\Gamma^2(j+1)}{j\Gamma^2(j\pm\lambda_{\alpha})}-\frac{\Gamma^2(j)}{(j-1)\Gamma^2(j-1\pm\lambda_{\alpha})}
  &=\frac{\Gamma^2(j)}{\Gamma^2(j\pm\lambda_{\alpha})}\left(j-\frac{(j-1\pm \lambda_{\alpha})^2}{j-1}\right) \\
  &=\frac{\Gamma^2(j)}{\Gamma^2(j\pm\lambda_{\alpha})}\left(1\mp 2\lambda_{\alpha} -\frac{\alpha_1\alpha_2}{j-1}\right),
\end{aligned}
$$
Since $\Gamma(j)/\Gamma(j\pm\lambda_{\alpha})$ are uniformly bounded for all $j\geq 2$, this implies that as $j\to \infty$,
$$\frac{\Gamma^2(j)}{\Gamma^2(j\pm\lambda_{\alpha})} = \frac{1}{1\mp 2\lambda_{\alpha}}\left(\frac{\Gamma^2(j+1)}{j\Gamma^2(j\pm\lambda_{\alpha})}-\frac{\Gamma^2(j)}{(j-1)\Gamma^2(j-1\pm\lambda_{\alpha})}\right)+O\left(\frac{1}{j}\right).$$
The desired result then follows from telescoping the above equality over $j$ from $2$ to $n$.
\end{proof}

The proof of Theorem \ref{Thmal2difsin2ndm} is similar to that of Theorem \ref{ThmtwoERW}, so we only highlight the main differences.

\begin{proof}[Proof of Theorem \ref{Thmal2difsin2ndm}]
  Recall $\varepsilon_{j+1}^{(x)}$ and $\varepsilon_{j+1}^{(y)}$ defined in (\ref{defvarexy}) and $\gamma_j(\cdot)$ and $\beta_j(\cdot)$ defined in (\ref{defgammabeta}). Observe that $M^{(x)}$ and $M^{(y)}$ defined in (\ref{defMxy}) are (complex-valued) martingales with bounded increments. Thus, by applying the law of the iterated logarithm for martingales \cite{stout1970martingale} to their real and imaginary parts separately, one has, almost surely,
$$
\limsup_{n\to \infty} \frac{|M_n^{(x)}|}{\sqrt{n \log \log  n }} \leq C(\alpha), \quad \limsup_{n\to \infty} \frac{|M_n^{(y)}|}{\sqrt{n \log \log  n}} \leq C(\alpha),
$$
where $C(\alpha)$ is a positive constant depending on $\alpha_1$ and $\alpha_2$. Then using the same argument as in (\ref{Sn1LILproofequ}), we obtain that, almost surely,
$$
\limsup_{n\to \infty} \frac{|S_n^{(1)}|}{\sqrt{n \log \log n}} \leq C(\alpha),
$$
which proves (\ref{Sn1LILnega}). 

For $k=1,2,\dots,n$, we let $\FF_{n,k}:=\FF_{k+1}$ and, by a slightly abuse of notation, let
$$
X_{n,k}=\frac{\sqrt{n+1}}{2} \left(\frac{\beta_{n+1}(\lambda_{\alpha})\gamma_k(\lambda_{\alpha})}{\beta_{k+1}(\lambda_{\alpha})} \varepsilon_{k+1}^{(x)}+\frac{\beta_{n+1}(-\lambda_{\alpha}) \gamma_k(-\lambda_{\alpha})}{\beta_{k+1}(-\lambda_{\alpha})} \varepsilon_{k+1}^{(y)}\right), \quad 1\leq k \leq n.
$$
For a complex number $z$, we denote its complex conjugate by $\bar{z}$. It is clear from the definitions (\ref{defgammabeta}), (\ref{defxyn}) and (\ref{defvarexy}) that 
$$
\bar{x}_k=y_k,\quad \overline{\varepsilon_{k+1}^{(x)}}=\varepsilon_{k+1}^{(y)}, \quad \overline{\gamma_k(\lambda_{\alpha})}=\gamma_k(-\lambda_{\alpha}), \quad \overline{\beta_k(\lambda_{\alpha})}=\beta_k(-\lambda_{\alpha}).
$$
In particular, $(X_{n,k})_{1\leq k \leq n}$ defined above is a real-valued martingale difference array. By (\ref{betanasy}) and Lemma \ref{asymvarxy}, almost surely, 
  \begin{equation}
    \label{limjvarexy}
\lim_{k\to \infty} \frac{ (n+1)^2\beta_{n+1}(\lambda_{\alpha})\gamma_k(\lambda_{\alpha})}{\beta_{k+1}(\lambda_{\alpha})} \frac{\beta_{n+1}(-\lambda_{\alpha}) \gamma_k(-\lambda_{\alpha})}{\beta_{k+1}(-\lambda_{\alpha})} \EE ( \varepsilon_{k+1}^{(x)} \varepsilon_{k+1}^{(y)} | \FF_k) = 1-\frac{\alpha_1}{\alpha_2}.
  \end{equation}
Using (\ref{defgammabeta}), Lemma \ref{asymvarxy} and Lemma \ref{lemsumGammanjsqr}, one has, almost surely, as $n\to \infty$,
\begin{equation}
  \label{condvarxdiff}
\begin{aligned}
  &\quad\ (n+1)^2\beta_{n+1}^2(\lambda_{\alpha})\sum_{k=1}^n\left(\frac{ \gamma_k(\lambda_{\alpha})}{\beta_{k+1}(\lambda_{\alpha})}\right)^2 \EE((\varepsilon_{k+1}^{(x)})^2 \mid \FF_k) \\
  &=\frac{\Gamma^2(n+1- \lambda_{\alpha})}{ \Gamma^2(n+1)}\sum_{k=1}^n(1+\lambda_{\alpha})^2\EE((\varepsilon_{k+1}^{(x)})^2 \mid \FF_k) \frac{\Gamma^2(k+1)}{\Gamma^2(k+1-\lambda_{\alpha})} \\
  &=\frac{(\alpha_1+\alpha_2)n}{\alpha_2(1-2\lambda_{\alpha})} + o(n),
\end{aligned}
\end{equation}
and similarly,
\begin{equation}
  \label{condvarydiff}
 ((n+1)\beta_{n+1}(\lambda_{\alpha}))^2\sum_{k=1}^n\left(\frac{ \gamma_k(\lambda_{\alpha})}{\beta_{k+1}(\lambda_{\alpha})}\right)^2 \EE((\varepsilon_{k+1}^{(x)})^2 \mid \FF_k)=\frac{(\alpha_1+\alpha_2)n}{\alpha_2(1+2\lambda_{\alpha})}+ o(n).
\end{equation}
Similarly as in (\ref{less12asymvarXnk}), by using (\ref{limjvarexy}), (\ref{condvarxdiff}) and (\ref{condvarydiff}), we obtain that, almost surely, 
\begin{equation}
  \lim_{n\to \infty}\sum_{k=1}^n \EE\left(X_{n, k}^2 \mid \mathcal{F}_{n, k-1}\right)= \frac{1+2\alpha_1^2-2\alpha_1\alpha_2}{1-4  \alpha_1\alpha_2}.
\end{equation}
The rest of the proof for first convergence in (\ref{2ERWCLTa12diff}) then follows the same lines as that of (\ref{2ERWCLTaless12}) (we note that there exists a positive constant $C(\alpha)$ such that $|\sqrt{n} X_{n, k} |\leq C(\alpha)$ for all $n\geq k\geq 1$). The second convergence in (\ref{2ERWCLTa12diff}) can be proved similarly. .
\end{proof}

\section{Proof of Theorem \ref{flucthm}}
\label{secthmfluc}
Recall $\hat{\beta}_n$, $\beta_n(\cdot)$ and $\gamma_n(\cdot)$ defined in Section \ref{secthm2dERW}. We first prove an auxiliary lemma.

\begin{lemma}
  \label{lemmaXnksquarecon}
(i). Assume that $\lambda_{\alpha}\in (1/2,1)$. For any $n\geq 1$ and $k\geq 1$, let 
   \begin{equation}
  \label{defXnkfluc}
 X_{n,k}:=\begin{cases}
   \frac{\sqrt{n}\beta_n(\lambda_{\alpha}) }{2} \frac{\gamma_{k}(\lambda_{\alpha})}{\beta_{k+1}(\lambda_{\alpha})} \varepsilon_{k+1}^{(x)}, & 1\leq k < n, \\
   -\frac{n^{\lambda_{\alpha}-\frac{1}{2}}}{2\Gamma(1+\lambda_{\alpha})} \frac{\gamma_{k}(-\lambda_{\alpha})}{\beta_{k+1}(-\lambda_{\alpha})} \varepsilon_{k+1}^{(y)}, & k\geq n,
 \end{cases}
 \end{equation}
 where $\varepsilon_{k+1}^{(x)}$ and $\varepsilon_{k+1}^{(y)}$ are defined be (\ref{defvarexy}). Then,  $\sup_{n\geq 1} \sum_{k=1}^{\infty}\EE X^2_{n,k}<\infty$, and
 $$
\lim_{n\to \infty}\sum_{k=1}^{\infty} X_{n,k}^2  =\left(1+\frac{\alpha_1}{\alpha_2}\right)\frac{\sqrt{\alpha_1\alpha_2}}{4\alpha_1\alpha_2-1} \quad \text{ in probability}.
  $$
  (ii). Assume that $\lambda_{\alpha}=1$ $($i.e. $\alpha_1=\alpha_2=1)$. For any $n\geq 2$ and $k\geq 2$, let 
    \begin{equation}
  \label{defXnkfluclambda1} 
 \widetilde{X}_{n,k}:=\begin{cases}
   \frac{\sqrt{n}\hat{\beta}_n}{2} k \varepsilon_{k+1}^{(x)}, & 2\leq k < n, \\
   -\frac{\sqrt{n}}{2} (y_{k+1}-y_{k}), & k\geq n.
 \end{cases}
 \end{equation}
 Then, $\sup_{n\geq 2}  \sum_{k=2}^{\infty}\EE \widetilde{X}^2_{n,k}<\infty$, and
 $$\lim_{n\to \infty}\sum_{k=2}^{\infty} \widetilde{X}_{n,k}^2  =\frac{2-2W^2}{3} \quad \text{ in probability},$$  
 where $W$ is as in Theorem \ref{ThmtwoERW} (iii).
\end{lemma}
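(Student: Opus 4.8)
The plan is to prove both parts by one common three-step scheme: split the sum at $k=n$ into a \emph{head} over $k<n$ and a \emph{tail} over $k\ge n$; replace each squared increment by its $\FF_k$-conditional expectation, with an error tending to $0$ in $L^2$; and evaluate the two resulting limits by a Toeplitz (Cesàro) summation fed by the Gamma-asymptotics (\ref{betanasy}) of $\beta_n(\pm\lambda_{\alpha})$ and by the conditional-variance limits of Lemma \ref{asymvarxy} (for part (i)) or of (\ref{condvarxnyn}) and Theorem \ref{ThmtwoERW} (iii) (for part (ii)).

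For the $L^2$-reduction, fix $n$: the $X_{n,k}$ are $\FF_{k+1}$-measurable with $\EE(X_{n,k}\mid\FF_k)=0$, so $\{X_{n,k}^2-\EE(X_{n,k}^2\mid\FF_k)\}_k$ is a sequence of orthogonal martingale differences and $\operatorname{Var}\big(\sum_k(X_{n,k}^2-\EE(X_{n,k}^2\mid\FF_k))\big)\le\sum_k\EE X_{n,k}^4$. Using that $\varepsilon_{k+1}^{(x)},\varepsilon_{k+1}^{(y)}$ are uniformly bounded and that $\gamma_k(\lambda_{\alpha})/\beta_{k+1}(\lambda_{\alpha})=O(k^{\lambda_{\alpha}})$, $\gamma_k(-\lambda_{\alpha})/\beta_{k+1}(-\lambda_{\alpha})=O(k^{-\lambda_{\alpha}})$, $\sqrt n\,\beta_n(\lambda_{\alpha})=O(n^{-1/2-\lambda_{\alpha}})$ (from (\ref{betanasy})), one gets $\sum_{k<n}\EE X_{n,k}^4=O(1/n)$ and, since $\lambda_{\alpha}>1/2$, also $\sum_{k\ge n}\EE X_{n,k}^4=O(1/n)$; hence $\sum_kX_{n,k}^2-\sum_k\EE(X_{n,k}^2\mid\FF_k)\to0$ in $L^2$, and the same bound with plain expectations gives $\sup_n\sum_k\EE X_{n,k}^2<\infty$. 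For part (ii) this runs identically from $\widetilde X_{n,k}^2=k^2(\varepsilon_{k+1}^{(x)})^2/(n(n-1)^2)$ for $k<n$ and $\widetilde X_{n,k}^2=\tfrac{n}{4}(y_{k+1}-y_k)^2$ with $|y_{k+1}-y_k|\le 4/(k+1)$ for $k\ge n$.

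It remains to compute $\lim_n\sum_k\EE(X_{n,k}^2\mid\FF_k)$. In part (i), (\ref{betanasy}) gives $(\gamma_k(\lambda_{\alpha})/\beta_{k+1}(\lambda_{\alpha}))^2\sim(1+\lambda_{\alpha})^2\Gamma^2(1-\lambda_{\alpha})k^{2\lambda_{\alpha}}$ and $(\gamma_k(-\lambda_{\alpha})/\beta_{k+1}(-\lambda_{\alpha}))^2\sim(1-\lambda_{\alpha})^2\Gamma^2(1+\lambda_{\alpha})k^{-2\lambda_{\alpha}}$, while Lemma \ref{asymvarxy} gives $(1+\lambda_{\alpha})^2\EE((\varepsilon_{k+1}^{(x)})^2\mid\FF_k)\to1+\alpha_1/\alpha_2$ and $(1-\lambda_{\alpha})^2\EE((\varepsilon_{k+1}^{(y)})^2\mid\FF_k)\to1+\alpha_1/\alpha_2$. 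Since the weights are deterministic and concentrated near $k=n$, with $\sum_{k<n}k^{2\lambda_{\alpha}}\sim n^{1+2\lambda_{\alpha}}/(1+2\lambda_{\alpha})$ and $\sum_{k\ge n}k^{-2\lambda_{\alpha}}\sim n^{1-2\lambda_{\alpha}}/(2\lambda_{\alpha}-1)$, a Toeplitz summation combined with $n\beta_n^2(\lambda_{\alpha})\sim n^{-1-2\lambda_{\alpha}}/\Gamma^2(1-\lambda_{\alpha})$ gives head $\to(1+\alpha_1/\alpha_2)/(4(1+2\lambda_{\alpha}))$ and tail $\to(1+\alpha_1/\alpha_2)/(4(2\lambda_{\alpha}-1))$. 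Adding and using $\frac{1}{1+2\lambda_{\alpha}}+\frac{1}{2\lambda_{\alpha}-1}=\frac{4\lambda_{\alpha}}{4\lambda_{\alpha}^2-1}$ with $\lambda_{\alpha}^2=\alpha_1\alpha_2$ and $\lambda_{\alpha}=\sqrt{\alpha_1\alpha_2}>0$ yields exactly $(1+\alpha_1/\alpha_2)\sqrt{\alpha_1\alpha_2}/(4\alpha_1\alpha_2-1)$. In part (ii) ($\lambda_{\alpha}=1$, $\alpha_1=\alpha_2=r_{\alpha}=1$) the scheme is the same, but $\varepsilon^{(y)}$ is unavailable, so the tail uses the explicit increments $y_{k+1}-y_k=(X_{k+1}^{(1)}+X_{k+1}^{(2)}-y_k)/(k+1)$ and the head uses the representation of $x_n$ from Lemma \ref{lemlambda1} (ii), the extra term $\tfrac{\sqrt n\,\hat\beta_n}{2}x_2=O(n^{-3/2})$ being negligible. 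From (\ref{condvarxnyn}) one has $\EE((\varepsilon_{k+1}^{(x)})^2\mid\FF_k)=\tfrac{1}{4}(2-x_k^2-2S_k^{(1)}S_k^{(2)}/k^2)$, and a direct computation gives $\EE((y_{k+1}-y_k)^2\mid\FF_k)=\tfrac{1}{(k+1)^2}(2-y_k^2+2S_k^{(1)}S_k^{(2)}/k^2)$; by Theorem \ref{ThmtwoERW} (iii) with $\alpha_1=\alpha_2=1$ one has $S_k^{(1)}/k\to W$, $S_k^{(2)}/k\to W$ a.s.\ while $x_k\to0$, $y_k\to2W$ a.s., so these converge to $(1-W^2)/2$ and $(2-2W^2)/k^2$ respectively. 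A pathwise Toeplitz summation then gives head $=\frac{1}{n(n-1)^2}\sum_{2\le k<n}k^2\EE((\varepsilon_{k+1}^{(x)})^2\mid\FF_k)\to(1-W^2)/6$ and tail $=\frac{n}{4}\sum_{k\ge n}\EE((y_{k+1}-y_k)^2\mid\FF_k)\to(1-W^2)/2$, whose sum is $(2-2W^2)/3$.

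The main obstacle, beyond the bookkeeping, is structural: one must recognize that the fluctuation picks up contributions of comparable order from the head of the fast-growing martingale $M^{(x)}$ (resp.\ the $x_n$-martingale) over $k<n$ and from the tail $M_{\infty}^{(y)}-M_n^{(y)}$ (resp.\ $y_{\infty}-y_n$) over $k\ge n$, and then carry out the two Cesàro estimates with precisely the right constants so that they combine into the stated value. A secondary subtlety specific to part (ii) is that the relevant conditional second moments converge to the \emph{random} quantity $(1-W^2)/2$ rather than to a deterministic constant, which forces the Toeplitz summation to be performed pathwise, invoking the almost sure convergence $S_k^{(i)}/k\to W$ supplied by Theorem \ref{ThmtwoERW} (iii).
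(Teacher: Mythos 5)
Your proof is correct and follows essentially the same route as the paper's: split the sum at $k=n$, replace $X_{n,k}^2$ by $\EE(X_{n,k}^2\mid \FF_k)$ with an error controlled by $\sum_k \EE X_{n,k}^4=O(1/n)$ via orthogonality of the differences and Chebyshev, and evaluate the head and tail limits using (\ref{betanasy}) together with Lemma \ref{asymvarxy} (resp.\ (\ref{condvarxnyn}) and the a.s.\ convergence $S_k^{(i)}/k\to W$ in part (ii)). One small remark: your explicit bookkeeping $\tfrac{1}{4}\bigl(1+\tfrac{\alpha_1}{\alpha_2}\bigr)\bigl(\tfrac{1}{1+2\lambda_{\alpha}}+\tfrac{1}{2\lambda_{\alpha}-1}\bigr)$ is the correct one --- the paper's display (\ref{asymcondvarXnkfluc}) contains a typo, writing $\tfrac{1}{2\lambda_{\alpha}-1}$ twice, although its stated final value agrees with yours.
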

\begin{proof}
(i). Recall that we use $C(\alpha)$ to denote a positive constant depending on $\alpha_1$ and $\alpha_2$ whose value may change from line to line. By definition (\ref{defvarexy}), we have $|\varepsilon^{(x)}_{k+1}| \leq C(\alpha)$ and $|\varepsilon^{(y)}_{k+1}| \leq C(\alpha)$ for all $k\geq 1$. Using (\ref{betanasy}), one can easily check that 
$$ \sup_{n\geq 1}\sum_{k=1}^{\infty}\EE X^2_{n,k} \leq C(\alpha) \sup_{n\geq 1} \left(n\beta^2_n(\lambda_{\alpha}) \sum_{k=1}^{n-1}\frac{\gamma^2_{k}(\lambda_{\alpha})}{\beta^2_{k+1}(\lambda_{\alpha})}+ n^{2\lambda_{\alpha}-1} \sum_{k=n}^{\infty} \frac{\gamma^2_{k}(-\lambda_{\alpha})}{\beta^2_{k+1}(-\lambda_{\alpha})}\right)<\infty.$$
By Lemma \ref{asymvarxy}, we have, almost surely,
  \begin{equation}
    \label{asymcondvarXnkfluc}
\begin{aligned}
    \lim_{n\to \infty}\sum_{k=1}^{\infty}\EE  (X_{n,k}^2 \mid \FF_{k})&=  \lim_{n\to \infty}\frac{n\beta^2_n(\lambda_{\alpha}) }{4}\sum_{k=1}^{n-1}\frac{\gamma^2_{k}(\lambda_{\alpha})}{\beta^2_{k+1}(\lambda_{\alpha})} \EE ((\varepsilon_{k+1}^{(x)})^2 \mid \FF_k )\\
  &\quad +  \lim_{n\to \infty}\frac{n^{2\lambda_{\alpha}-1}}{4\Gamma^2(1+\lambda_{\alpha})} \sum_{k=n}^{\infty} \frac{\gamma^2_{k}(-\lambda_{\alpha})}{\beta^2_{k+1}(-\lambda_{\alpha})} \EE ((\varepsilon_{k+1}^{(y)})^2 \mid \FF_k ) \\ 
  &=\frac{1}{4}\left(1+\frac{\alpha_1}{\alpha_2}\right)\left(\frac{1}{2\lambda_{\alpha}-1}+\frac{1}{2\lambda_{\alpha}-1}\right) = \left(1+\frac{\alpha_1}{\alpha_2}\right)\frac{\sqrt{\alpha_1\alpha_2}}{4\alpha_1\alpha_2-1}.
  \end{aligned} 
  \end{equation}
 On the other hand, for any $\varepsilon>0$, by Markov's inequality, 
  \begin{equation}
    \label{Xnksquareconv1}
\PP(\left|\sum_{k=1}^{n-1} (X_{n,k}^2- \EE(X_{n,k}^2 \mid \FF_k))\right|\geq \varepsilon) \leq \frac{\sum_{k=1}^{n-1} \EE X_{n,k}^4}{\varepsilon^2} \leq \frac{C(\alpha)}{\varepsilon^2 n} \to 0, \quad \text{ as } n\to \infty,
  \end{equation}
  where we used (\ref{betanasy}) and that $\varepsilon_{k+1}^{(x)}$ is uniformly bounded in $k$ in the second inequality. Similarly, one can show that, for any $\varepsilon>0$, 
  $$\PP(\left|\sum_{k=n}^{\infty} (X_{n,k}^2- \EE(X_{n,k}^2 \mid \FF_k))\right|\geq \varepsilon) \leq \frac{C(\alpha)}{\varepsilon^2 n} \to 0, \quad \text{ as } n\to \infty,$$
  which, together with (\ref{asymcondvarXnkfluc}) and (\ref{Xnksquareconv1}), implies the desired result. 

  (ii). From the proof of Theorem \ref{ThmtwoERW} (iii), we see that $x_n\to 0$ almost surely and that both $S^{(1)}_n/n$ and $S^{(2)}_n/n$ converge almost surely to $W=\lim_{n\to \infty}y_n/2$. As shown in (\ref{recudeltxn}), for any $k\geq 2$, one has 
  $$y_{k+1}-y_k=\frac{1}{k+1}(X_{k+1}^{(1)}+X_{k+1}^{(2)}-y_k).$$ 
One can then deduce from (\ref{defvarexy}) and (\ref{condvarxnyn}) that almost surely,
  $$\lim_{k\to \infty} k^2\EE((y_{k+1}-y_k)^2 \mid \FF_k)=2-2W^2, \quad \lim_{k\to \infty} \EE((\varepsilon^{(x)}_{k+1} )^2 \mid \FF_k)=\frac{1-W^2}{2}.$$
 Then, by using the same argument as in (i), we obtain the desired result.
\end{proof}

We are now in a position to prove Theorem \ref{flucthm}.

\begin{proof}[Proof of Theorem \ref{flucthm}]
  Without loss of generality, we assume that $\lambda_{\alpha}>1/2$. The case $\lambda_{\alpha}<-1/2$ can be proved similarly. \\
  (i). We first assume that $\lambda_{\alpha}\in (1/2,1)$. For $n\geq 1$ and $k\geq 1$, we let $S_{n,k}:=\sum_{j=1}^{k} X_{n,j}$ where $(X_{n,j})_{n\geq 1, j\geq 1}$ are as in (\ref{defXnkfluc}), and let $\FF_{n,k}:=\FF_{k+1}$. Then for each $n\geq 1$, the sequence $(S_{n,k}, \FF_{n,k})_{k\geq 1}$ is a square-integrable martingale. In particular, for any $n\geq 1$,
  $$
S_{n,\infty}:=\lim_{k\to \infty} S_{n,k}=\frac{\sqrt{n}\beta_n(\lambda_{\alpha}) }{2}\sum_{j=1}^{n-1} \frac{\gamma_j(\lambda_{\alpha})}{\beta_{j+1}(\lambda_{\alpha})} \varepsilon_{j+1}^{(x)}-\frac{n^{\lambda_{\alpha}-\frac{1}{2}}}{2\Gamma(1+\lambda_{\alpha})} \sum_{j=n}^{\infty} \frac{\gamma_j(-\lambda_{\alpha})}{\beta_{j+1}(-\lambda_{\alpha})} \varepsilon_{j+1}^{(y)}
  $$
  exists. From (\ref{Sn1Sn2xny}), Lemma \ref{lemmaxyn} and the proof of Theorem \ref{ThmtwoERW} (iii), we see that 
  $$
  \begin{aligned}
    \frac{S_n^{(1)}}{n^{\lambda_{\alpha}}}-W&= \frac{n^{1-\lambda_{\alpha}}\beta_n(\lambda_{\alpha})}{2}(x_1 +M_n^{(x)}) + \frac{n^{1-\lambda_{\alpha}}\beta_n(-\lambda_{\alpha})}{2}(y_1+M_n^{(y)})-\frac{y_1+M_{\infty}^{(y)}}{2\Gamma(1+\lambda_{\alpha})} \\
    &=\frac{n^{1-\lambda_{\alpha}}\beta_n(\lambda_{\alpha})}{2}M_n^{(x)} - \frac{M_{\infty}^{(y)}-M_n^{(y)}}{2\Gamma(1+\lambda_{\alpha})}  +R_n,
  \end{aligned}
  $$
  where 
  $$
 R_n:=\frac{n^{1-\lambda_{\alpha}}\beta_n(\lambda_{\alpha})}{2}x_1+ \left(\frac{n^{1-\lambda_{\alpha}}\beta_n(-\lambda_{\alpha})}{2}-\frac{1}{2\Gamma(1+\lambda_{\alpha})}\right) (y_1+M_{\infty}^{(y)}).
$$
 By the properties of the Gamma function, we have that, as $n\to \infty$,
 $$n^{1-\lambda_{\alpha}}\beta_n(-\alpha)-\frac{1}{\Gamma(1+\lambda_{\alpha})}=O\left(\frac{1}{n}\right).$$  
 And in particular, for $n\geq 1$, we can write 
 $$
  n^{\lambda_{\alpha}-\frac{1}{2}}\left(\frac{S_n^{(1)}}{n^{\lambda_{\alpha}}}-W\right) =S_{n,\infty} + n^{\lambda_{\alpha}-\frac{1}{2}} R_n,
$$
where $n^{\lambda_{\alpha}-\frac{1}{2}} R_n \to 0$ almost surely as $n\to \infty$. We note that, by (\ref{betanasy}), there exists a positive constant $C(\alpha)$ such that for any $n,k\geq 1$, one has $|X_{n,k}| \leq C(\alpha) n^{-\frac{1}{2}}$. In particular, $\sup_k|X_{n,k}|\to 0$ in probability as $n\to \infty$ and $\EE\left( \sup_k X^2_{n,k}\right)$ is bounded in $n$. 
Then the desired result follows from Lemma \ref{lemmaXnksquarecon} (i) and Slutsky's theorem. 

(ii). Now we assume that $\lambda_{\alpha}=1$. Recall from the proof of Lemma \ref{lemmaXnksquarecon} (ii) that $W=\lim_{n\to \infty}y_n/2$. Thus, by (\ref{Sn1Sn2xny}) and Lemma \ref{lemlambda1} (ii), for $n>2$, one has
$$
\sqrt{n}\left(\frac{S_n^{(1)}}{n}-W\right)=\frac{\sqrt{n}\hat{\beta}_n}{2}\sum_{k=2}^{n-1} j\varepsilon_{k+1}^{(x)}-\frac{\sqrt{n}}{2}\sum_{k=n}^{\infty}(y_{k+1}-y_k)+\frac{\sqrt{n}\hat{\beta}_n}{2} x_2.
$$
The rest of the proof is similar to that of (i). The only difference is that we use Lemma \ref{lemlambda1} (ii) and Lemma \ref{lemmaXnksquarecon} (ii) instead of Lemma \ref{lemmaxyn} and Lemma \ref{lemmaXnksquarecon} (i). We omit the details here.  
\end{proof}

\section{Acknowledgments}

Rafik Aguech is supported by Ongoing Research Funding program, (ORF-2025-987), King Saud University, Riyadh, Saudi Arabia. Shuo Qin is supported by the China Postdoctoral Science Foundation under Grant Number 2025M773086.

\bibliographystyle{plain}
\bibliography{math_ref}

\end{document}